\def\RR{\mathbb{R}}
\def\supp{\mathrm {supp}\,}
\def\Rx{\mathcal{R}_x }
\def\bO{\bar{\Omega}}
\def\dO{\pa\Omega}
\def\d{\, {\rm{d}} } 
\def\supp{\, {\rm{supp}} }
\def\pa{\partial}
\def\na{\nabla}
\def\eps{\varepsilon}
\def\HSR2s{\mathcal{H}_{\textrm{\tiny{SR}}}^{2s} }
\def\Lsr*{\mathscr{L}_{\textrm{\tiny{SR}}}^*}
\def\Hdiff0{\mathcal{H}_{\textrm{\tiny{diff}},0}^s }
\theoremstyle{plain}
\newtheorem{theorem}{Theorem}[section]
\newtheorem*{theorem*}{Theorem}
\newtheorem{lemma}[theorem]{Lemma}
\newtheorem*{lemma*}{Lemma}
\newtheorem{prop}[theorem]{Proposition}
\newtheorem*{prop*}{Proposition}
\newtheorem{defi}{Definition}[section]
\newtheorem*{defi*}{Definition}
\newtheorem{remark}[theorem]{Remark}
\newtheorem{cor}[theorem]{Corollary}
\newtheorem*{rep@theorem}{\rep@title}
\newcommand{\newreptheorem}[2]{%
\newenvironment{rep#1}[1]{%
 \def\rep@title{#2 \ref{##1}}%
 \begin{rep@theorem}}%
 {\end{rep@theorem}}}
\def\Cmu{C^1_tC^{1,\mu}} 
\def\Cmup{C^1_tC^{1,\mu'}} 
\def\Cmutwo{C^1_tC^{2,\mu}}
\def\Cmuptwo{C^1_tC^{2,\mu'}}
\def\hu{{\hat{U}}}
\def\bu{{\bar{U}}} 
\title{Global well-posedness of Vlasov-Poisson-type systems in bounded domains}
\date{}
\author{Ludovic Cesbron\thanks{Department of Mathematics, ETH Z\"urich, Switzerland}, Mikaela Iacobelli$^*$}
\begin{document}
	\maketitle
	
	\begin{abstract}
	In this paper we prove global existence of classical solutions to the Vlasov-Poisson and the ionic Vlasov-Poisson models in bounded domains. On the boundary, we consider the specular reflection boundary condition for the Vlasov equation and either homogeneous Dirichlet or Neumann conditions for the Poisson equations. 
	\end{abstract}
	
	
\section{Introduction} \label{sec:intro}

In this paper, we investigate the well-posedness of Vlasov-Poisson models in bounded domains. These models describe the evolution of particles in a plasma, which is an ionised gas mostly constituted of two species of charged particles: ions and electrons. Due to the significant difference in size between those two species, the former being much larger and slower than the latter, it is classical to decouple their dynamics.\\
On the one hand, when one investigates the behaviour of electrons it is reasonable to assume that the ions are stationary. Assuming the plasma has low density and that the velocity of the particles is significantly lower than the speed of light -- i.e. neglecting electron-electron collisions and magnetic forces -- one can model the evolution of the distribution function of the electrons $f=f(t,x,v)$,  which represents at time $t$ the probability of finding an electron at position $x$ with velocity $v$, by the following Vlasov-Poisson system: 
\begin{equation} \label{eq:VP}
	(VP) : = \left\{ \begin{aligned}
		&\pa_t f + v\cdot \na_x f + E\cdot\na_v f = 0  &\mbox{ in } (0,+\infty)\times \Omega\times \RR^d,\\
		& E=-\na U, \quad \Delta U = -\rho  & \mbox{ in } (0,+\infty)\times \Omega ,\\
		&f|_{t=0} = f_0  & \mbox{ in } \Omega\times\RR^d 
	\end{aligned} \right. 
\end{equation}
where $\rho = \int_{\RR^d} f \d v$ is the macroscopic density. In this model, the Vlasov equation describes the transport of the electrons under the influence of the electric field $E$, while the Poisson equation models how the electric potential $U$ is generated by the distribution of the electrons. We shall always assume that the initial distribution $f_0$ is non-negative and normalized:
\begin{align} \label{hyp:f0} 
	f_0  \geq 0, \quad \iint_{\Omega\times\RR^d} f_0 \d x \d v = 1.
\end{align}
On the other hand, when one investigates the behaviour of the ions in the plasma, it is common in physics literature to assume that the electrons are close to thermal equilibrium. Indeed, although electron-electron collisions are neglected in the model above because of their rarity, they become relevant in the ion's timescale and it is reasonable to assume that the distribution of the electrons is the thermal equilibrium of a collisional kinetic model. The Vlasov-Poisson model for massless electrons (VPME) -- sometimes called ionic Vlasov-Poisson -- which is a celebrated model for the evolution of ions, can then be derived asymptotically as the ratio of mass between electrons and ions grows small. For more details on the massless limit we refer to \cite{BGNS18}, and for a more thorough introduction of this model we refer e.g. to \cite{GriffinIacobelli20}. The VPME system consists of a Vlasov equation coupled with a non-linear Poisson equation which models how the electric potential is generated by the distribution of the ions and the Maxwell-Boltzmann distribution of the electrons. It reads
\begin{equation} \label{eq:VPME} 
	\left\{ \begin{aligned}
		&\pa_t f + v\cdot \na_x f + E\cdot\na_v f = 0  &\mbox{ in } (0,+\infty)\times \Omega\times \RR^d\\
		& E=-\na U, \quad \Delta U = e^U -\rho - 1  & \mbox{ in } (0,+\infty)\times \Omega \\
		&f|_{t=0} = f_0  & \mbox{ in } \Omega\times\RR^d 
	\end{aligned} \right. 
\end{equation}
with the same assumptions \eqref{hyp:f0} on $f_0$. \\
We consider a bounded $C^{2,1}$ domain $\Omega$ in $\RR^d$ defined via a $C^{2,1}$ function $\xi: \RR^d \to \RR$ as $\Omega = \lbrace x\in \RR^d :\, \xi(x)<0 \rbrace$ and $\dO = \lbrace x\in \RR^d :\, \xi(x) =0 \rbrace$. We assume that $\Omega$ is uniformly convex which means that for some $C_\Omega> 0$ we have
\begin{equation} \label{eq:convexity}
	v \cdot  \na^2 \xi(x)\cdot  v \geq C_\Omega |v|^2  \qquad  \mbox{ for all } (x,v)\in \bO\times\RR^d 
\end{equation}
where $\na^2 \xi$ denotes the Hessian matrix of $\xi$: $v\cdot \na^2 \xi(x) \cdot v = \sum_{i,j} v_i v_j \pa_{ij}\xi(x) $. We also assume that the normal vectors are well defined on the boundary, i.e. $\na \xi(x) \neq 0 $ for any $x$ such that $|\xi(x)| \ll 1$. The outward unit normal vector is then defined, for $x\in \dO$ as: $n(x) =  \na \xi(x) / |\na \xi(x)| $. \\
On the boundary of $\Omega$ we need to prescribe the behaviour of the particles in the Vlasov equation, as well as the behaviour of the electric potential $U$ in the Poisson equation. For the Vlasov part, the boundary condition takes the form of a balance between the in-going and out-going traces of $f$ in the phase-space. Namely, if we introduce the sets $\gamma_\pm = \lbrace (x,v): x\in \dO,\,  \pm v\cdot n(x) >0\rbrace$ and write $\gamma_\pm f$ the restriction of the trace of $f$ to $\gamma_\pm$ then the boundary condition takes the form 
\begin{align*}
	\gamma_- f (t,x,v) = \mathcal{B}[\gamma_+ f](t,x,v) & \quad \mbox{ on } (0,+\infty)\times\gamma_- .
\end{align*}
In this paper, we will focus on the Specular reflection boundary condition: 
\begin{equation} \label{eq:SR}
	\gamma_- f(t,x,v) =  \gamma_+ f (t,x, \Rx v )  \quad \mbox{ on } (0,+\infty)\times\gamma_- 
\end{equation}
with $\Rx v = v-2(v\cdot n(x)) n(x)$. This means that we assume the boundary is a surface with no asperities and the particles bounce on this surface in a billiard-like fashion.  \\
For the Poisson equation on the electric potential $U$, we will consider either the homogenous Dirichlet condition
\begin{equation}  \label{eq:Dirichlet} 
	U(t,x) = 0  \quad \mbox{ on } (0,+\infty)\times\dO
\end{equation}
or the Neumann boundary condition
\begin{equation} \label{eq:Neumann}
	\pa_n U(t,x) = h \quad \mbox{ on } (0,+\infty)\times\dO
\end{equation}
where $\pa_n U = n(x)\cdot \na U$ is the normal derivative of $U$ at $x\in\dO$. Note that we will require $h$ to satisfy a compatibility condition in order for the system to be well-posed. The Dirichlet boundary condition arises when one assumes that the boundary is perfect conductor, and that it is grounded for the homogeneous case that we consider. On the other hand, the Neumann boundary condition comes down to specifying the value of the electric field $E$ everywhere on the surface. We refer e.g. to \cite[Chapter 1.9]{Jackson99} for a more detailed physical interpretation of these conditions. \\

In the case $\Omega=\RR^3$, the Cauchy theory for the Vlasov-Poisson system \eqref{eq:VP} is well developed. In particular, existence and uniqueness of global in time classical $C^1$ solutions has been established in the 90's \cite{Pfaffelmoser1992,Schaeffer91,Horst1993,Lions-Perthame} and there is also a extended literature on weaker notions of solutions, see e.g. \cite{Arsenev75,HorstHunze84,DipernaLions88,AmbrosioColomboFigalli17}. The bounded domain case is more challenging due to the fact that singularities may form at the boundary and propagate inside the domain even in one dimension \cite{Guo95}. In the case of the half-space, the existence of global classical solutions was proved in \cite{Guo94,HwangVelazquez09} for the specular reflection condition \eqref{eq:SR} and both Neumann and Dirichlet conditions on the electric potential. Note that while Y. Guo proved well-posedness in \cite{Guo94} by adapting the velocity moments method of P.L. Lions and B. Perthame \cite{Lions-Perthame}, H.J. Hwang and J.J.L. Vel{\'a}zquez took a different approach in \cite{HwangVelazquez09} by adapting the ideas of Pfaffelmoser \cite{Pfaffelmoser1992} to the half-space case. In both approaches, the key difficulty is the analysis of the trajectories of the particles, governed by the Vlasov equation, near the singular set $\gamma_0$, see \eqref{def:gamma0}, where these transport dynamics are degenerate. H.J. Hwang and J.J.L. Vel{\'a}zquez also refined their approach in order to consider uniformly convex domains of class $C^5$ in \cite{HwangVelazquez10} by means of local changes of coordinates near the singular set $\gamma_0$ that allow them to efficiently estimate the effect of the curvature of the boundary on the transport dynamics. 
In our paper we develop a more global analysis of the trajectories of the particles, without local coordinates, in order to lower the regularity of the boundary to $C^{2,1},$ which is optimal for our notion of classical solutions. Note that weaker notions of solutions have also been investigated in bounded domains, see for instance \cite{Alexandre93,Abdallah94,Weckler95,Mischler99,FernandezReal18}.  \\

For the VPME system, the Cauchy theory is much less developed due to the difficulties arising from the additional non-linearity of the Poisson equation. In the case $\Omega=\RR^3$, global in time weak solutions were first constructed by F. Bouchut in \cite{Bouchut91} and, in one dimension, D. Han-Kwan and M. Iacobelli constructed global weak solutions for measure data with bounded first moment in \cite{HanKwanIacobelli17}. More recently, M. Griffin-Pickering and M. Iacobelli proved the global well-posedness of VPME in the torus in dimensions 2 and 3 \cite{GriffinIacobelli21torus}, and in the whole space in dimension 3 \cite{GriffinIacobelli21R3}. They proved existence of strong solutions for measure initial data with bounded moments -- strong in the sense that if the initial data is $C^1$ then the solution they construct is a classical $C^1$ solution -- and uniqueness of solution with bounded density in the spirit of Loeper's uniqueness result for Vlasov-Poisson \cite{Loeper06}. In this paper we present the first result of well-posedness for VPME in bounded domains.

\section{Main results} \label{sec:mainresults}

Let us first consider the Vlasov-Poisson system \eqref{eq:VP}. We will prove existence and uniqueness of classical solutions in a bounded domain $\Omega$ in dimension $3$ with the boundary conditions mentioned above. One of the key difficulty is to control the behaviour of the solution $f$ near the grazing set 
\begin{align} \label{def:gamma0} 
\gamma_0 := \lbrace (x,v): x\in\dO, \, v\cdot n(x) =0 \rbrace
\end{align}
where the Vlasov equation with specular reflections is degenerate. In order to avoid having singularities at the initial time we shall assume flatness of the initial distribution near the grazing set. Furthermore, we will also assume compactness of support in velocity and regularity of $f_0$. Namely, we consider initial distributions $f_0$ satisfying \eqref{hyp:f0} and
\begin{align} 
	& f_0 \in C^{1,\mu}(\bO\times\RR^3), \quad \mu\in(0,1)  \label{hyp:regf0VP} \\
	& \supp f_0 \subset \subset \bO\times\RR^3  \label{hyp:compactsupportf0VP}\\
	&f_0(x,v) = \mbox{constant}  \quad \mbox{ for all }(x,v) \mbox{ such that } \alpha(0,x,v) \leq \delta_0 \label{hyp:flatnessVP} 
\end{align}
where $\alpha(0,x,v)$ is the kinetic distance defined in Definition \ref{defi:kineticdistance} and Lemma \ref{lem:VelocityLemma}, which measures a distance to the grazing set $\gamma_0$. 
Before stating our existence result for the Vlasov-Poisson system we also need a compatibility condition on $h$ in the Neumann boundary condition case \eqref{eq:Neumann} in order to ensure well-posedness. This condition can be derived by integrating the boundary condition over $\dO$ and using Green's formula and the Poisson equation: 
\begin{equation} \label{eq:Neumanncompatibility} 
	\int_{\dO} h(x) \d x = - \iint_{\Omega\times\RR^3} f_0(x,v) \d x \d v  =-1. 
\end{equation}
Finally, we introduce the following notation: for $\mathcal{A}\subseteq\Omega$ or $\Omega\times\RR^3$ and $T>0$ we write 
\begin{align*}
	\Cmu([0,T]\times\mathcal{A}) := C^1([0,T];C^0(\mathcal{A})) \cap L^\infty((0,T);C^{1,\mu}(\mathcal{A})) .
\end{align*}

We now state our main result for the Vlasov-Poisson system: 
\begin{theorem}\label{thm:mainVP} 
Let $\Omega\subset \RR^3$ be a $C^{2,1}$ uniformly convex domain, $f_0$ satisfy \eqref{hyp:f0}-\eqref{hyp:regf0VP}-\eqref{hyp:compactsupportf0VP}-\eqref{hyp:flatnessVP}, and consider the Vlasov-Poisson system \eqref{eq:VP} with the specular reflection condition \eqref{eq:SR} for the Vlasov equation, and either the Dirichlet boundary condition \eqref{eq:Dirichlet} or Neumann boundary condition \eqref{eq:Neumann} with $h$ satisfying \eqref{eq:Neumanncompatibility} for the Poisson equation. \\
There exists a unique classical solution $f\in \Cmup ((0,\infty)\times\Omega\times\RR^3)$ and $E \in \Cmuptwo((0,\infty)\times\Omega)^3$ for some $\mu' \in(0,\mu)$.
\end{theorem}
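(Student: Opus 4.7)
The plan is to construct a classical solution by a Picard-type iterative scheme and then establish uniqueness via a stability argument. Starting from $f^0(t,x,v) = f_0(x,v)$, at each step I would form the density $\rho^n := \int_{\RR^3} f^n\,\d v$, solve the Poisson equation on $\Omega$ with the prescribed boundary conditions to obtain $U^n$ and $E^n = -\na U^n$, and then solve the linear Vlasov equation with transport field $(v,E^n)$ and specular reflections at $\dO$ by the method of characteristics to produce $f^{n+1}$. The objective is to show that this scheme preserves the class $\Cmup([0,T]\times\bO\times\RR^3)$ uniformly in $n$, with velocity supports bounded on any finite time interval, and then converges.

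For the Poisson step, the $C^{2,1}$ regularity of $\dO$ combined with classical Schauder estimates gives $U^n \in C^{2,\mu}$ (the Neumann case requires the compatibility condition \eqref{eq:Neumanncompatibility}), hence $E^n \in C^{1,\mu}(\bO)^3$. For the linear Vlasov step, the characteristic flow $(X(s;t,x,v),V(s;t,x,v))$ is well-defined and smooth in the interior, and undergoes specular reflections at $\dO$; the only obstruction to $C^1$ regularity lies on the grazing set $\gamma_0$. Here the velocity lemma (Lemma \ref{lem:VelocityLemma}) plays the central role: the kinetic distance $\alpha$ is almost conserved along characteristics, with a Gronwall-type factor depending on $\|E^n\|_{L^\infty}$, so the flatness assumption \eqref{hyp:flatnessVP} at $t=0$ propagates to a neighbourhood of $\gamma_0$ at later times, on which $f^{n+1}$ remains constant; away from $\gamma_0$, characteristics are $C^{1,\mu}$ perturbations of straight-line-plus-reflection motion, and $f^{n+1}$ inherits $C^{1,\mu}$ regularity.

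To obtain global-in-time bounds I would adapt the Pfaffelmoser--Schaeffer moment method, in the form used by \cite{HwangVelazquez09,HwangVelazquez10}, to control the maximum velocity $Q(t) := \sup\{|v| : (x,v)\in\supp f(t,\cdot,\cdot)\}$. Specular reflections preserve $|v|$, so growth of $Q$ comes only from the field; splitting the field integral into close, intermediate and far regions and using $\|\rho\|_{L^\infty} \lesssim Q^3 \|f\|_{L^\infty}$ leads to an integral inequality giving $Q(t) \leq C(T)$ on every $[0,T]$. Together with Schauder estimates this closes the uniform $\Cmup$ bounds for the iteration, allowing passage to the limit by Arzel\`a--Ascoli with a small loss $\mu' < \mu$ in H\"older exponent, yielding the existence part of the theorem.

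The main obstacle is the analysis of characteristics near $\gamma_0$ without the local curvilinear coordinates of \cite{HwangVelazquez10}, which force $C^5$ regularity of $\dO$. The substitute is a global quantitative version of the velocity lemma tailored to $C^{2,1}$ boundaries: one needs $\na^2\xi$ Lipschitz to obtain the correct second-order cancellations when estimating $d\alpha/dt$ along the flow, and uniform convexity \eqref{eq:convexity} to prevent reflected trajectories from accumulating tangent to $\dO$ in finite time. For uniqueness, I would compare two solutions $f_1,f_2$ sharing the same initial data by estimating the difference of their characteristic flows in terms of $\|E_1-E_2\|_{L^\infty}$, which is itself controlled by $\|\rho_1-\rho_2\|_{L^\infty}$ via elliptic estimates, and closing the loop with a Gronwall argument that exploits the uniform density and velocity-support bounds already established.
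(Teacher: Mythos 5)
Your proposal follows essentially the same route as the paper: an iterative Vlasov/Poisson scheme with Schauder estimates for the field, a global Velocity Lemma propagating the flatness assumption near the grazing set (in place of the $C^5$ local coordinates of Hwang--Vel\'azquez), a Pfaffelmoser-type splitting to bound $Q(t)$ globally, and a Gr\"onwall argument for uniqueness. The only cosmetic difference is that the paper runs the uniqueness estimate in an $L^1$ framework for $f^1-f^2$ (using $\|\na_v f^2\|_{L^\infty_x L^1_v}$) rather than through $\|\rho_1-\rho_2\|_{L^\infty}$, but both close the same way.
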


Apart from our analysis of the trajectories of transport, our strategy of proof is somewhat classical. We begin in any dimension $d$ with an approximation of the Vlasov-Poisson system by a sequence of linear equations \eqref{eq:iterative}. We show, using our analysis of the trajectories of transport, that given a fixed electric field $E$ the Vlasov equation has a solution in the appropriate functional space. Vice-versa, by classical elliptic regularity we know that given a regular enough density $\rho$ the Poisson equation in $\Omega$ will have a regular solution. This yields sequences of solutions $(f^n)$ and $(E^n)$ to the linear Vlasov and Poisson equations. We then assume boundedness of the velocity uniformly in time, namely that for all $t\in[0,T]$ the quantity $Q^n(t) = \sup \lbrace |v| : (x,v)\in \supp f^n(s),\, 0\leq s\leq t \rbrace$ is bounded by some $K(T)>0$. Under this assumption we show convergence of the sequences $(f^n)$ and $(E^n)$ to a solution of the Vlasov-Poisson system. Then, we restrict ourself to the case $d=3$ and remove the assumption of uniformly bounded velocities by proving that if the velocities are initially bounded \eqref{hyp:compactsupportf0VP} then $Q(t) = \lim_{n\to\infty} Q^n(t)$ is bounded for all $t\in[0,T]$ via a Pfaffelmoser-type argument. Finally, we conclude the proof of Theorem \ref{thm:mainVP} with the global in time existence by showing that the bound on $Q(t)$ holds as $T\to \infty$ and prove uniqueness of solution adapting the idea of P.L. Lions and B. Perthame \cite{Lions-Perthame} albeit in a $L^1$ framework. \\

This strategy of construction of solution via an iterative sequence also applies to the VPME case, however the non-linearity of the Poisson equation in \eqref{eq:VPME} will remain in the iterative sequence therefore classical elliptic regularity will not provide the desired regularity estimates on the force field $E$. In order to derive such estimates we adopt a Calculus of Variation approach, identifying the solution of the Poisson equation with the minimiser of an energy functional which will take into account the boundary conditions. Furthermore, we introduce a splitting of the electric potential into a singular part $\hat{U}$, solution to a linear Poisson equation, and a regular part $\bar{U}$ solution to a non-linear elliptic PDE, in the spirit of \cite{HanKwanIacobelli17,GriffinIacobelli21torus}. The purpose of this splitting is to isolate the difficulties due to the non-linearity from those that we can handle via classical elliptic regularity theory. The estimates we derive, which are stated in Proposition \ref{prop:EllipticRegDir} for the Dirichlet case, and Proposition \ref{prop:EllipticRegNeu} for the Neumann case, are crucial for our analysis and can be useful in the study of singular limits for VPME, as done in \cite{Griffin-PickeringIacobelliJMPA}.
In the Neumann boundary condition case this will naturally lead to a compatibility condition on $h,$ which reads as follows: 
	\begin{align}\label{hyp:hPMENeu} 
h<0, \qquad 	\int_{\dO} |h| \d \sigma(x) < 1+ |\Omega| .
\end{align}
We can now state our main result in the VPME case: 
\begin{theorem}\label{thm:mainVPME} 
	Let $\Omega\subset \RR^3$ be a $C^{2,1}$ uniformly convex domain, $f_0$ satisfy \eqref{hyp:f0}-\eqref{hyp:regf0VP}-\eqref{hyp:compactsupportf0VP}-\eqref{hyp:flatnessVP}, and consider the Vlasov-Poisson model for massless electrons \eqref{eq:VPME} with the specular reflection condition \eqref{eq:SR} for the Vlasov equation, and either the Dirichlet boundary condition \eqref{eq:Dirichlet} or Neumann boundary condition \eqref{eq:Neumann} with $h$ satisfying \eqref{hyp:hPMENeu} for the Poisson equation. \\
	There exists a unique classical solution $f\in \Cmup ((0,\infty)\times\Omega\times\RR^3)$ and $E \in \Cmuptwo((0,\infty)\times\Omega)^3$ for some $\mu' \in(0,\mu)$.
\end{theorem}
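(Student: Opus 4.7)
The plan is to follow the same iterative scheme as in the proof of Theorem \ref{thm:mainVP}, but replace the linear Poisson step by a variational argument tailored to the nonlinearity $e^U$. More precisely, starting from $f^0 \equiv f_0$ we define inductively $U^{n+1}$ as the solution of $\Delta U^{n+1} = e^{U^{n+1}} - \rho^n - 1$ with $\rho^n = \int f^n \d v$ and the prescribed boundary condition, and then $f^{n+1}$ as the solution of the linear Vlasov equation with specular reflection driven by $E^{n+1} = -\nabla U^{n+1}$. The Vlasov step is identical to the one used for \eqref{eq:VP}: the trajectory analysis near $\gamma_0$ and the $C^{1,\mu}$ estimates on $f^{n+1}$ depend only on the regularity of $E^{n+1}$, not on how $E^{n+1}$ was produced.

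The heart of the matter is to produce $E^{n+1}$ in $\Cmutwo$ from $\rho^n$ of the same regularity as in the VP analysis. For this I would follow the strategy already announced by the authors, splitting $U^{n+1} = \hat{U}^{n+1} + \bar{U}^{n+1}$, where $\hat{U}^{n+1}$ solves the \emph{linear} Poisson equation $\Delta \hat{U}^{n+1} = -\rho^n - 1$ with the boundary condition \eqref{eq:Dirichlet}, respectively \eqref{eq:Neumann}, and $\bar{U}^{n+1}$ is defined to solve the nonlinear but source-free equation
\begin{equation*}
\Delta \bar{U}^{n+1} = e^{\hat{U}^{n+1} + \bar{U}^{n+1}}, \qquad \bar{U}^{n+1}|_{\partial\Omega} = 0 \text{ or } \partial_n \bar{U}^{n+1}|_{\partial\Omega} = 0.
\end{equation*}
The singular part $\hat{U}^{n+1}$ is handled by classical elliptic theory. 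For the regular part I would identify $\bar{U}^{n+1}$ with the minimiser of the convex energy functional
\begin{equation*}
\mathcal{E}[w] = \int_\Omega \Big( \tfrac{1}{2} |\nabla w|^2 + e^{\hat{U}^{n+1} + w} \Big) \d x
\end{equation*}
over the appropriate Sobolev space (with the constraint $\int_{\partial\Omega} \partial_n w \, \d\sigma = -\int_{\partial\Omega} \partial_n \hat{U}^{n+1} \d\sigma - \int_{\partial\Omega} h\, \d\sigma$ in the Neumann case). Existence of the minimiser in the Neumann case is exactly where the compatibility condition \eqref{hyp:hPMENeu} is used: it ensures that the functional is coercive and bounded below on the appropriate affine subspace. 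Once $\bar{U}^{n+1}$ is obtained, Schauder estimates applied to the semilinear equation, combined with a bootstrap using that $e^w$ is smooth in $w$, give $\bar{U}^{n+1}$ in $C^{2,\mu}$ with bounds depending only on $\|\rho^n\|_{C^{0,\mu}}$ and the data; these are precisely the estimates announced as Propositions \ref{prop:EllipticRegDir} and \ref{prop:EllipticRegNeu}.

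With these elliptic estimates in hand, the rest of the argument follows the VP template. Assuming a uniform velocity bound $Q^n(t) \le K(T)$, I would derive uniform $C^{1,\mu}$ bounds on $f^n$ and $C^{2,\mu}$ bounds on $U^n$, extract a convergent subsequence, and pass to the limit in the iterative scheme — the nonlinearity $e^{U^n} \to e^U$ passes pointwise and in $C^0$ by uniform bounds. Then, restricting to $d=3$, I would run the Pfaffelmoser-type argument to control $Q(t)$ from \eqref{hyp:compactsupportf0VP}: the key fact used in that argument is an $L^\infty$ bound on $E$ together with a smallness estimate on the contribution to $E$ of particles of very high velocity, which for VPME is obtained from the elliptic splitting above since $\bar{U}$ is uniformly bounded once $\rho$ is controlled in $L^1 \cap L^\infty$. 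Global-in-time existence then follows exactly as for VP. Uniqueness is obtained along the lines of the Lions–Perthame $L^1$-stability argument already used in the VP case, the extra nonlinear term being handled via the monotonicity of $u \mapsto e^u$, which gives a favourable sign when taking the difference of two Poisson equations and testing against $U_1 - U_2$.

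The main obstacle I expect is the elliptic step in the Neumann setting: verifying that \eqref{hyp:hPMENeu} is the sharp condition guaranteeing that the variational problem admits a minimiser with the required regularity, and propagating $C^{2,\mu'}$ regularity of $\bar{U}$ up to the boundary when $\Omega$ is only of class $C^{2,1}$. Everything else is either a direct transcription of the VP trajectory analysis or a soft bootstrap using Schauder estimates on a smooth nonlinearity.
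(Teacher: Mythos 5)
Your overall route is the same as the paper's: iterate a linear Vlasov step against a nonlinear elliptic step, obtain the elliptic regularity via a variational argument combined with a regular/singular splitting of the potential, run the Pfaffelmoser argument on the singular (linear Poisson) part only, and prove uniqueness by the $L^1$ stability argument with the monotonicity of $u\mapsto e^u$ controlling the difference of the nonlinear parts. The Dirichlet half of your plan matches Proposition \ref{prop:EllipticRegDir} essentially verbatim (the paper solves the full nonlinear problem variationally first and uses the splitting only for the regularity bootstrap, but that is a matter of ordering).

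The Neumann elliptic step, however, is broken as written --- and it is exactly the step you flagged as the main obstacle. First, the boundary condition $\partial_n\bar U^{n+1}=0$ for $\Delta\bar U^{n+1}=e^{\hat U^{n+1}+\bar U^{n+1}}$ is unsolvable: integrating the equation gives $\int_{\partial\Omega}\partial_n\bar U^{n+1}\,\d\sigma(x)=\int_\Omega e^{U^{n+1}}\,\d x>0$. Second, your substitute ``constraint'' $\int_{\partial\Omega}\partial_n w\,\d\sigma = -\int_{\partial\Omega}\partial_n\hat U\,\d\sigma-\int_{\partial\Omega}h\,\d\sigma$ has the wrong sign (it should be $\int_{\partial\Omega}h\,\d\sigma-\int_{\partial\Omega}\partial_n\hat U\,\d\sigma$) and, more importantly, is not a condition one can impose on competitors in $H^1(\Omega)$: the Neumann datum must enter as a boundary integral $-\int_{\partial\Omega}g\,w\,\d\sigma$ in the energy, not as a constraint. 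Third, without such a boundary term and without the bulk term $-w$ (which the paper keeps in the nonlinear piece by writing $\Delta\hat U=e^{\bar U+\hat U}-1$), your functional $\int_\Omega\big(\tfrac12|\nabla w|^2+e^{\hat U+w}\big)\,\d x$ has infimum $0$ over $H^1(\Omega)$, approached by constants $w=c\to-\infty$, so no minimiser exists. The paper's resolution is the distribution \eqref{def:hubuNeu}: $\Delta\bar U=-\rho$ with $\int_{\partial\Omega}h_2\,\d\sigma=-1$ and $\Delta\hat U=e^{\bar U+\hat U}-1$ with $h_1\le0$, $\int_{\partial\Omega}h_1\,\d\sigma=\int_{\partial\Omega}h\,\d\sigma+1$; condition \eqref{hyp:hPMENeu} is exactly what makes the full functional coercive in the constant direction (estimate \eqref{eq:Ckhyph}), and the pointwise sign $h_1\le0$ is what allows the Moser-type iteration with test functions $e^{k(\hat U\wedge n)}$ to discard the boundary term and conclude $e^{\hat U}\in L^k(\Omega)$ for every $k$. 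Your splitting, which puts the whole of $e^U$ (without the $-1$) into the nonlinear piece, forces that piece's Neumann datum to have positive integral, hence it cannot be everywhere nonpositive, and the iteration picks up an uncontrolled positive boundary term. Adopting the paper's distribution of the constant and of the boundary data fixes this; the rest of your argument then goes through as you describe.
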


For the sake of clarity, we have decided to devote the core of our paper to the Vlasov-Poisson system and treat the VPME case independently in the last section. Since the general method is the same, we will only highlight in that section the differences between the two cases and the modifications required for the VPME case.  \\
The paper is thus organised as follows. In Section \ref{sec:velocitylemma} we develop our global analysis of the trajectories of particles governed by the Vlasov equation in any dimension $d$, which culminates in the Velocity Lemma \ref{lem:VelocityLemma}. In Section \ref{sec:iterative} we focus on the Vlasov-Poisson case and show well-posedness via an approximating sequence of linear problems as explained above and a Pfaffelmoser-like argument in dimension 3 to show  boundedness of the velocities. Finally, in Section \ref{sec:VPME} we derive Elliptic regularity estimates for the non-linear Poisson equation of \eqref{eq:VPME} in any dimension $d$ and outline the proof of well-posedness for the VPME system.  

\section{Velocity Lemma} \label{sec:velocitylemma}

Consider a uniformly convex $C^{2,1}$ domain $\Omega$ and a field $E\in C^{0,1}([0,T]\times\bO)^d$ satisfying $E(t,x) \cdot n(x) >0$ for all $t\in[0,T]$ and $x\in\dO$. The characteristic curves associated to the Vlasov equation with specular reflections $(X_s,V_s)=(X(s;t,x,v), V(s;t,x,v))$ are governed by the following ODE system
\begin{align}
& \pa_s X_s = V_s  & X(t;t,x,v) = x,  \label{eq:characX}\\
& \pa_s V_s = E(s,X_s) & V(t;t,x,v) = v, \label{eq:characV} \\
& V_{s^+} = V_{s^-} - 2 ( n(X_\tau)\cdot V_{s^-} )n(X_s) & \mbox{for all } s \mbox{ s.t. } X_s \in \dO. \label{eq:characSR}  
\end{align}
These trajectories evolve in the phase-space $\bO\times\RR^d$. The purpose of this section is to characterise their distance to the grazing set $\gamma_0,$ and to deduce some control on the number of bounces on $\dO$ that such trajectories undergo in a finite time interval. To that end we begin by defining a notion of kinetic distance 
\begin{defi} \label{defi:kineticdistance}
Consider $\delta>0$ and define the neighbourhood of the boundary 
\begin{align*}
\dO_\delta:= \lbrace x\in\bO: \mbox{dist}(x,\dO)<\delta\rbrace. 
\end{align*} 
We say that $\alpha :[0,T]\times\bO\times\RR^d \to \RR_+$ is a $\delta$-kinetic distance if $\alpha\in C^0([0,T]\times\bO\times\RR^d),$ and satisfies
\begin{align} \label{eq:KTkernel}
\mbox{for all } (t,x,v) \in [0,T]\times\dO_\delta\times\RR^d :\quad   \big[ \alpha(t,x,v) = 0 \big]  \Leftrightarrow \big[ (x,v) \in \gamma_0 \big].
\end{align}
\end{defi}
We then have the following Lemma of isolation of the grazing set.
\begin{lemma} \label{lem:isolatedgrazing}
Consider $E\in C^{0,1}([0,T]\times\bO)^d$ and the flow of transport $( X(s;t,x,v), V(s;t,x,v))$ given by \eqref{eq:characX}-\eqref{eq:characV}-\eqref{eq:characSR}. If there exists a $\delta$-kinetic distance $\alpha$ such that for all $(t,x,v)\in [0,T]\times\dO_\delta\times\RR^d$ and $s\in[0,T]$: 
\begin{align} \label{eq:isolationgamma0}
C^-_s \alpha(t,x,v) \leq \alpha\big( s,X(s;t,x,v), V(s;t,x,v)\big) \leq C^+_s \alpha(t,x,v)
\end{align}
with $C^\pm_s = C^\pm_s(t-s,x,v) >0$, then the grazing set $\gamma_0$ is isolated in the sense that a trajectory $s\to ( X(s;t,x,v), V(s;t,x,v))$ can only reach $\gamma_0$ is if starts in $\gamma_0$. 
\end{lemma}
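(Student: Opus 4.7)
The plan is to argue by contradiction. I will suppose there exists $s_0 \in [0,T]$ with $(X(s_0), V(s_0)) \in \gamma_0$ while the initial datum $(x,v) \notin \gamma_0$, and derive a contradiction by combining the kinetic-distance characterisation \eqref{eq:KTkernel} at both endpoints with the sandwich \eqref{eq:isolationgamma0}. The key preliminary observation is that $\gamma_0 \subset \dO\times\RR^d \subset \dO_\delta\times\RR^d$, so \eqref{eq:KTkernel} applies at $(s_0, X(s_0), V(s_0))$ and forces $\alpha(s_0, X(s_0), V(s_0)) = 0$. The whole game is then to obtain a strictly positive lower bound on the same quantity from \eqref{eq:isolationgamma0}.

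The difficulty is that the sandwich \eqref{eq:isolationgamma0} is only stated when the \emph{initial} position sits in $\dO_\delta$, which prompts a two-case analysis. If $x \in \dO_\delta$, then \eqref{eq:KTkernel} also applies at the starting point and the assumption $(x,v) \notin \gamma_0$ gives $\alpha(t,x,v) > 0$; feeding this and $\alpha(s_0, X(s_0), V(s_0)) = 0$ into the lower bound of \eqref{eq:isolationgamma0} yields
\begin{equation*}
0 \, < \, C^-_{s_0}\, \alpha(t,x,v) \; \leq \; \alpha\bigl(s_0, X(s_0), V(s_0)\bigr) \,=\, 0,
\end{equation*}
a contradiction. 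If instead $x \notin \dO_\delta$, I would use the continuity of $s \mapsto X(s)$ (the position component of the characteristic flow is continuous even across specular reflections, where only $V$ jumps) to pick a time $s_1$ strictly between $t$ and $s_0$ at which $X(s_1) \in \dO_\delta \setminus \dO$. By \eqref{eq:KTkernel} one then has $\alpha(s_1, X(s_1), V(s_1)) > 0$, and invoking the semigroup property of the flow to restart the characteristic at $(s_1, X(s_1), V(s_1))$ reduces the problem to applying \eqref{eq:isolationgamma0} between $s_1$ and $s_0$, producing the same contradiction.

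The argument is essentially book-keeping once \eqref{eq:isolationgamma0} is at hand, and there is no real obstacle. The only delicate point is ensuring that the initial position of every trajectory to which the sandwich is applied lies in $\dO_\delta$, which is precisely what motivates the two-case split and the restart at $s_1$ in the second case. No regularity of $\alpha$ beyond the continuity built into Definition \ref{defi:kineticdistance} is required, and the specular reflections play no special role here because $\alpha$ is continuous on the whole phase space and the positions along the characteristics are continuous in time.
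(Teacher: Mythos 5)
Your proof is correct and follows essentially the same route as the paper's: both reduce, via the continuity of $s\mapsto X(s;t,x,v)$ across reflections, to the case where the trajectory passes through a point of $(\dO_\delta\times\RR^d)\setminus\gamma_0$, where \eqref{eq:KTkernel} gives strict positivity of $\alpha$ and the sandwich \eqref{eq:isolationgamma0} then forbids $\alpha$ from vanishing later. Your write-up is merely more explicit than the paper's (the two-case split and the restart at $s_1$ are exactly what the paper sketches), and it is sound.
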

\begin{proof} 
If $\delta$ is large enough so that $\dO_\delta = \bO$ then the isolation of the grazing set is a direct consequence of \eqref{eq:KTkernel} and \eqref{eq:isolationgamma0} since the former states that $\alpha$ only cancels on $\gamma_0$, while the latter states that $\alpha$ cannot cancel along a trajectory $(X_s,V_s)$ unless it is initially naught. When $\dO_\delta \subsetneq \bO,$ then there may exist $(x,v)\notin\dO_\delta\times\RR^d \supset \gamma_0$ such that $\alpha(0,x,v)=0$. In order to prove Lemma \ref{lem:isolatedgrazing} it is enough to show that a trajectory $(X_s,V_s)$ starting in $\Omega\times\RR^d$ cannot reach $\gamma_0$ without going through $(\dO_\delta\times\RR^d) \setminus \gamma_0$. This follows immediately from the continuity of $s\to X(s;t,x,v)$ given by \eqref{eq:characX}-\eqref{eq:characV}-\eqref{eq:characSR} with $E\in C^{0,1}([0,T]\times\bO)^d$.
\end{proof}
We now turn to the main result of this section: the Velocity Lemma which states the existence of a kinetic distance. 	
\begin{lemma}[Velocity Lemma] \label{lem:VelocityLemma}
Consider a $C^{2,1}$ uniformly convex domain $\Omega$ and a field $E\in C^{0,1}([0,T]\times\Omega)^d$ such that $E(t,x)\cdot\na\xi(x) \geq C_0 >0$ for all $t\in[0,T]$ and $x\in\dO$. We define 
\begin{align}  \label{def:alphadeltakin}
\alpha(s,x,v) =  \frac12 (v\cdot \na \xi(x) )^2 + \big(v\cdot \na^2\xi(x)\cdot v + E(s,x)\cdot\na\xi(x) \big)|\xi(x)| .
\end{align}
There exists $\delta>0$ such that $\alpha$ is a $\delta$-kinetic distance which satisfies \eqref{eq:isolationgamma0} with 
\begin{align*}
C_s^\pm = \exp\big( \pm C_0 \big[ (|v|+1)|s-t| + \lVert E\lVert_{L^\infty}(s-t)^2\big]\big)
\end{align*}
and $C_0=C_0(\| \xi\|_{C^{2,1}}, \|E\|_{C^{0,1}})$.  
\end{lemma}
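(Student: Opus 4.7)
The plan is to show that $\alpha$ behaves essentially like a conserved quantity along the characteristic flow, up to controlled exponential factors, thanks to a crucial algebraic cancellation between its two terms.

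First I would verify the kinetic distance property on a sufficiently thin tubular neighborhood $\dO_\delta$. On $\dO$ itself we have $\xi(x)=0$, so $\alpha(s,x,v)=\tfrac12(v\cdot\na\xi(x))^2$ vanishes exactly on $\gamma_0$. In the interior part of $\dO_\delta$, the coefficient $B(s,x,v):=v\cdot\na^2\xi(x)\cdot v+E(s,x)\cdot\na\xi(x)$ is bounded below by $C_\Omega|v|^2+C_0/2$: uniform convexity controls the first term, and continuity of $E$ and $\na\xi$ together with the boundary hypothesis $E\cdot\na\xi\geq C_0$ on $\dO$ forces the second term to remain at least $C_0/2$ once $\delta$ is small enough. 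Hence $\alpha\geq B|\xi|>0$ strictly off the boundary, so \eqref{eq:KTkernel} holds. This bound will also be essential later since it yields the key estimates $|v|^2|\xi|\leq\alpha/C_\Omega$ and $|\xi|\leq 2\alpha/C_0$ throughout $\dO_\delta\times\RR^d$.

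Next I would differentiate $\alpha$ along a smooth piece of trajectory, writing $A(s):=V_s\cdot\na\xi(X_s)$ and $B(s):=V_s\cdot\na^2\xi(X_s)\cdot V_s+E(s,X_s)\cdot\na\xi(X_s)$, so that $\alpha=\tfrac12 A^2+B|\xi(X_s)|$. A direct computation using \eqref{eq:characX}--\eqref{eq:characV} gives $A'=B$ and $\tfrac{d}{ds}|\xi(X_s)|=-A$ (since $\xi(X_s)<0$ in the interior), which produces the cancellation
\begin{equation*}
\frac{d}{ds}\alpha(s,X_s,V_s)\;=\;AB+B'|\xi(X_s)|-AB\;=\;B'(s)\,|\xi(X_s)|.
\end{equation*}
Expanding $B'$ produces terms controlled, using $\xi\in C^{2,1}$ and $E\in C^{0,1}$, by $C(\|\xi\|_{C^{2,1}},\|E\|_{C^{0,1}})(1+|V_s|^3+\|E\|_\infty|V_s|)$. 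Using the two inequalities from the previous paragraph to absorb the factor $|\xi|$, I obtain
\begin{equation*}
\Big|\frac{d}{ds}\alpha(s,X_s,V_s)\Big|\;\leq\;C_0\bigl(|V_s|+1\bigr)\,\alpha(s,X_s,V_s),
\end{equation*}
and then the bound $|V_s|\leq |v|+\|E\|_\infty|s-t|$ upgrades this to the desired form featuring $(|v|+1)|s-t|+\|E\|_\infty(s-t)^2$ after integration.

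Before applying Grönwall, I would handle the boundary bounces \eqref{eq:characSR}: at a reflection time $s_0$ one has $\xi(X_{s_0})=0$, so the second term of $\alpha$ vanishes and only $\tfrac12 A^2$ matters; since specular reflection flips the sign of $V\cdot n(X_{s_0})$ and hence of $V\cdot\na\xi(X_{s_0})$, this term is unchanged, so $\alpha$ is continuous across every bounce. Thus the differential inequality above, valid piecewise, yields via Grönwall the two-sided estimate \eqref{eq:isolationgamma0} with the announced $C_s^\pm$. The main difficulty I anticipate is the regularity bookkeeping: terms in $B'$ formally involve third derivatives of $\xi$, which only exist almost everywhere for $\xi\in C^{2,1}$, so I would interpret $d(V_s\cdot\na^2\xi(X_s)\cdot V_s)/ds$ as the $s$-derivative of a Lipschitz composition and bound it by $\|\na^2\xi\|_{\mathrm{Lip}}|V_s|^3$, ensuring the estimate and Grönwall argument remain rigorous along the absolutely continuous trajectories.
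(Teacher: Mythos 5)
Your proposal is correct and follows essentially the same route as the paper: the same verification that $\alpha$ vanishes only on $\gamma_0$ in a thin collar $\dO_\delta$ via uniform convexity and the sign condition on $E\cdot\na\xi$, the same algebraic cancellation (the paper writes it as $\alpha=\tfrac12(\pa_s b)^2-b\,\pa^2_{ss}b$ with $b=\xi(X_s)$, so that $\tfrac{\d}{\d s}\alpha=-b\,\pa^3_{sss}b$, which is exactly your $B'|\xi|$), the same lower bound $\alpha\gtrsim|\xi|(C_\Omega|V_s|^2+1)$ to absorb the factor $|\xi|$, and the same Grönwall conclusion using $|V_s|\leq|v|+\|E\|_{L^\infty}|s-t|$. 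Your two extra remarks — continuity of $\alpha$ across specular bounces and the a.e.\ interpretation of $\na^3\xi$ for $\xi\in C^{2,1}$ — are details the paper leaves implicit but are handled correctly.
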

\begin{proof}
Since $E$ is continuous and $E(t,x)\cdot\na\xi(x) \geq C_0 >0$ for all $t\in[0,T]$ and $x\in\dO$, there exists $\delta>0$ such that $E(t,x)\cdot\na\xi(x)>0$ for all $x\in\dO_\delta$. In that neighbourhood we have, using \eqref{eq:convexity} and the continuity of $\na^2\xi$, that for all $v\in\RR^d$ and $t\in[0,T]$:
\[   v\cdot \na^2\xi(x)\cdot v + E(t,x)\cdot\na\xi(x)  >0,  \]
therefore $\alpha$ only cancels if both $v\cdot \na \xi(x) =0$ and $\xi(x)=0$, i.e. if $(x,v)\in \gamma_0;$ hence $\alpha$ is a $\delta$-kinetic distance. \\
We will prove that $\alpha$ satisfies \eqref{eq:isolationgamma0} by a Gr\"{o}nwall argument, differentiating $\alpha$ along a trajectory. To that end, let us notice that if we write $b = \xi(X_s),$ then $\pa_s b = V_s\cdot\na_x\xi(X_s)$ and $\pa^2_{ss} b = V_s\cdot \na^2\xi(X_s)\cdot V_s + E(s,X_s)\cdot\na\xi(X_s)$ so that we have 
\[ \alpha(s,X_s,V_s) = \frac12 (\pa_s b)^2 - b \pa^2_{ss} b \]
and we easily compute $\tfrac{\d}{\d s} \alpha = -b \pa^3_{sss} b$:
\begin{align*}
\frac{\d}{\d s}\alpha(s,X_s,V_s) &= |\xi(X_s)| \Big( V_s \cdot (V_s\cdot \na^3 \xi(X_s) \cdot V_s) + 3 E(s,X_s)\cdot \na^2 \xi(X_s)\cdot V_s \\
&\qquad +  (\pa_s E(s,X_s)+ V_s\cdot\na_xE(s,X_s))\cdot\na\xi(X_s) \Big) .
\end{align*}
Our regularity assumptions on $E$ and $\xi$ yield on the one hand
\begin{align*}
\left| \frac{\d}{\d s} \alpha(s,X_s,V_s)\right| &\leq C  |\xi(X_s)| \Big( |V_s|^3 \lVert \na^3 \xi \lVert_{L^\infty} + |V_s| \lVert E \lVert_{C_x^{0,1}}  \lVert \xi \lVert_{C^{1,1}} + \lVert \pa_s E\lVert_{L^\infty} \lVert \na \xi\lVert_{L^\infty}  \Big) \\
&\leq C |\xi(X_s)|  ( |V_s|^3+ |V_s| +1)  
\end{align*}
and on the other hand since $\Omega$ is uniformly convex \eqref{eq:convexity}  
\begin{align}
\alpha(s,X_s,V_s) \geq C |\xi(X_s)| ( C_\Omega |V_s|^2 + 1 )
\end{align}
with $C_\Omega >0$. Hence
\begin{align*}
\left| \frac{\d}{\d s} \alpha(s,X_s,V_s)\right|  \leq C (|V_s|+1) \alpha(s,X_s,V_s)
\end{align*}
and Gr\"{o}nwall's Lemma concludes the proof with \eqref{eq:characV}.
%
\end{proof}

\begin{remark} \label{rmk:non-unif} 
Note that if we only assume that $\Omega$ is convex but not uniformly, i.e. $C_\Omega \geq 0$ in \eqref{eq:convexity}, then $\alpha$ given by \eqref{def:alphadeltakin} is still a $\delta$-kinetic distance. However, for $s\leq t,$ the constant $C_s^\pm$ in \eqref{eq:isolationgamma0} given by the Gr\"{o}nwall argument is controlled by
\begin{align*}
C_s^\pm &\leq \exp\bigg(\pm C \bigg[ (|v|+1)|s-t| + \lVert E\lVert_{L^\infty}(s-t)^2+ \int_s^t |V_\tau| \d \tau \bigg] \bigg)\\
&\leq \exp\Big(\pm C \Big[ (|v|+1)|s-t| + \lVert E\lVert_{L^\infty}(s-t)^2+\frac{ (|v|+ \lVert E\lVert_{L^\infty}(s-t))^4}{4\lVert E\lVert_{L^\infty}} - \frac{|v|^4}{4\lVert E\lVert_{L^\infty}} \Big] \Big).
\end{align*}
We will see in Section \ref{subsec:boundonQt} that this bound is not enough to close our proof of existence of classical solutions to the Vlasov-Poisson systems. We believe that the non-uniformly convex domain case actually requires a much finer characterisation of the isolation of the grazing set. 
\end{remark}

The Velocity Lemma is an essential part of this proof because it is directly related with the number of reflections on the boundary that a trajectory $s\to (X(s;t,x,v),V(s;t,x,v))$ undergoes within a given time interval, and consequently with the fact that said trajectory is uniquely defined. Morally, the closer you are to the grazing set the more reflections can happen. We characterise this relation in the following Lemma in which we establish an upper bound on the number of reflections along a trajectory within a given time interval in terms of the distance of that trajectory to the grazing set. 
\begin{lemma}\label{lem:regularitycharacflow}
Under the assumptions of Lemma \ref{lem:VelocityLemma}, for any $(t,x,v)\in [0,T]\times\Omega\times\RR^d$ the trajectory $s\to (X(s;t,x,v), V(s;t,x,v))$ is uniquely defined and the number of reflections $k$ that it undergoes within an interval of time $s\in(t-\Delta,t)$ is bounded above:
\begin{align*}
k \leq \Delta C_1 \frac{(|v|+\Delta \|E\|_{L^\infty})^2+ \|E\|_{L^\infty}}{\sqrt{\alpha(t,x,v)}} e^{C_0 [ (|v|+1) \Delta + \| E\|_{L^\infty}\Delta^2]} 
\end{align*}	
with $C_1=C_1(\Omega) >0$ and $C_0$ is given by Lemma \ref{lem:VelocityLemma}. 
\end{lemma}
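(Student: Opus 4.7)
The plan is to combine three ingredients: (i) between reflections, the characteristic ODE \eqref{eq:characX}-\eqref{eq:characV} has a unique smooth solution; (ii) the Velocity Lemma gives a lower bound on $(V\cdot\na\xi)^2$ at every reflection time, so that the specular rule \eqref{eq:characSR} is well-defined there; and (iii) between two consecutive reflections the scalar function $b(s):=\xi(X_s)$ must leave zero and come back while its second derivative is uniformly bounded, which forces a lower bound on the gap between reflections proportional to $\sqrt{\alpha(t,x,v)}$.

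First I would reduce to the case $(x,v)\in\partial\Omega_\delta\times\RR^d$: if no reflection occurs in $(t-\Delta,t)$ then $k=0$ and the bound is trivial, and otherwise we may re-base the trajectory at the first time it enters $\partial\Omega_\delta$, losing nothing in the estimates. Then I would check the crucial invariance that $\alpha$ is continuous across every specular reflection: at a reflection point $\xi(X_s)=0$, so $\alpha(s,X_s,V_s)=\tfrac12(V_s\cdot\na\xi(X_s))^2$, and the specular rule only changes the sign of $V\cdot n$, leaving this quantity unchanged. This invariance allows me to apply the Velocity Lemma \ref{lem:VelocityLemma} along the whole (piecewise-smooth) trajectory, obtaining at every reflection time $t_i\in(t-\Delta,t)$
\begin{equation*}
(V_{t_i}\cdot\na\xi(X_{t_i}))^2 \;\geq\; 2\,\alpha(t,x,v)\,\exp\!\big(-C_0[(|v|+1)\Delta+\|E\|_{L^\infty}\Delta^2]\big).
\end{equation*}
In particular this quantity is strictly positive, so Lemma \ref{lem:isolatedgrazing} applies and $(X_{t_i},V_{t_i})\notin\gamma_0$, which makes the reflected velocity unambiguous; together with local existence and uniqueness for \eqref{eq:characX}-\eqref{eq:characV} between reflections, this yields the uniqueness of the trajectory (provided reflections do not accumulate, which is the output of the quantitative bound below).

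For the counting bound, let $t_1<\cdots<t_k$ be the reflection times in $(t-\Delta,t)$. Since $b(t_i)=0$ and $b(s)<0$ in $(t_i,t_{i+1})$, we have $b'(t_i^{+})\le 0$ and $b'(t_{i+1}^{-})\ge 0$. Using Step~1 in the form
$|b'(t_i^{+})|,\,|b'(t_{i+1}^-)| \geq \sqrt{2\alpha(t,x,v)}\,\exp(-\tfrac{C_0}{2}[(|v|+1)\Delta+\|E\|_{L^\infty}\Delta^2])$,
we integrate $b''(s)=V_s\cdot\na^2\xi(X_s)\cdot V_s + E(s,X_s)\cdot\na\xi(X_s)$ over $(t_i,t_{i+1})$. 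Combined with the trivial bound $|V_s|\le |v|+\Delta\|E\|_{L^\infty}$ coming from \eqref{eq:characV} and the $C^{2,1}$ regularity of $\xi$, this yields
\begin{equation*}
t_{i+1}-t_i \;\geq\; \frac{2\sqrt{2\alpha(t,x,v)}\,e^{-\frac{C_0}{2}[(|v|+1)\Delta+\|E\|_{L^\infty}\Delta^2]}}{C\big[(|v|+\Delta\|E\|_{L^\infty})^2+\|E\|_{L^\infty}\big]}.
\end{equation*}
Summing over $i=1,\ldots,k-1$ and using $\sum(t_{i+1}-t_i)\le\Delta$ gives the claimed upper bound on $k$, after absorbing constants into $C_1=C_1(\Omega)$.

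The main obstacle I anticipate is bookkeeping the Velocity Lemma estimate across potential earlier reflections inside $(t-\Delta,t)$: one must make sure the Grönwall factor $C^\pm_s$ genuinely applies along a trajectory that bounces several times. The invariance of $\alpha$ at reflections resolves this, because it means the ODE $\tfrac{d}{ds}\alpha = -b\,\pa^3_{sss}b$ used in the proof of the Velocity Lemma may be integrated piecewise without any jump in $\alpha$ at the reflection times. A minor secondary issue is confirming that $\alpha(t,x,v)>0$ whenever $(x,v)\in\Omega\times\RR^d$ so that the estimate is not vacuous, which follows from uniform convexity and $E\cdot\na\xi\ge C_0>0$ on $\dO$ together with $\xi(x)<0$ in $\Omega$.
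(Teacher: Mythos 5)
Your proposal is correct and follows essentially the same strategy as the paper: a lower bound on $\alpha$ (equivalently on $|V\cdot\na\xi|$) at reflection times via the Velocity Lemma, combined with a second-order Taylor estimate on $\xi$ along the trajectory between consecutive reflections to force a minimal gap of order $\sqrt{\alpha(t,x,v)}/\big((|v|+\Delta\|E\|_{L^\infty})^2+\|E\|_{L^\infty}\big)$. The only cosmetic difference is that you phrase the Taylor step through the scalar $b(s)=\xi(X_s)$ and the sign conditions on $b'$ at the two zeros, while the paper expands $\xi$ along the chord $X_{s_i}-X_{s_{i+1}}$; your explicit check that $\alpha$ is invariant under specular reflection is a useful remark that the paper leaves implicit.
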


%

\begin{proof}
	The fact that the trajectory is uniquely defined follows directly from the upper bound on the number of reflections. Indeed, for any $(t,x,v)\in[0,T]\times(\Omega\times\RR^d)\setminus\gamma_0$, if the trajectory $(X(s;t,x,v),V(s;t,x,v))$ undergoes a finite number of reflections in a finite time, then we can construct the trajectory by the composition of a finite number of transport given by the ODE system \eqref{eq:characX}-\eqref{eq:characV} and specular reflections \eqref{eq:characSR}. The velocity component, $s\to V(s;t,x,v)$ will be piecewise continuous since $E$ is in $C^{0,1}([0,T]\times\bO)^d$, with discontinuities at the reflections times, and since specular reflections do not affect the norm of the velocity, we see that $s\to |V(s;t,x,v)|$ will be continuous. Furthermore, the position component $s\to X(s;t,x,v)$ will be continuous and piecewise $C^1$.\\
	Let us now fix $(t,x,v)\in[0,T]\times(\bO\times\RR^d)\setminus \gamma_0$ and prove the upper bound on the number of reflections. Since $E\in L^\infty([0,T]\times\Omega)$, the norm of the velocity $V(s;t,x,v)$ is uniformly bounded on $(t-\Delta,t)$ as 
	\begin{align*} 
    |V(s;t,x,v)|\leq M= |v|+ \Delta \lVert E\lVert_{L^\infty}.
    \end{align*} 
    Let us consider two consecutive reflection times $t-\Delta \leq s_{i+1}<s_i \leq t$, namely such that $X_{s_i} = X(s_i;t,x,v) \in \dO$, $X_{s_{i+1}} = X(s_{i+1};t,x,v) \in \dO$ and for all $s\in(s_{i+1},s_i)$, $X_s \notin \dO$. Since the velocity is bounded and by continuity of the transport flow on $(s_{i+1},s_i)$, we have immediately 
	\begin{align} \label{eq:boundedXs}
	\frac{|X_{s_{i}}-X_{s_{i+1}} | }{s_{i}-s_{i+1}} < M. 
	\end{align}
	Furthermore, integrating \eqref{eq:characX} and \eqref{eq:characV} over $(s_{i+1},s_{i})$ we get
	\begin{align*} 
	X_{s_{i}} &= X_{s_{i+1}} + \int_{s_{i+1}}^{s_{i}} \Big( V_{s_{i+1}} + \int_{s_{i+1}}^\tau E(u,X_u) \d u \Big)\d \tau \\
	&= X_{s_{i+1}} + (s_{i}-s_{i+1}) V_{s_{i+1}} + \int_{s_{i+1}}^{s_{i}} \int_{s_{i+1}}^\tau E(u,X_u) \d u \d \tau,
	\end{align*}
	which yields, multiplying by $\na\xi (X_{s_{i+1}})$
	\begin{align} \label{eq:Vneargrazing}
	\big| V_{s_{i+1}} \cdot \na\xi (X_{s_{i+1}}) \big| \leq \left| \frac{X_{s_{i}}-X_{s_{i+1}}}{s_{i}-s_{i+1}} \cdot \na\xi (X_{s_{i+1}}) \right| + \frac12  \lVert E \lVert_{L^\infty} (s_{i}-s_{i+1}) .
	\end{align}
	We know that the norm of $\frac{X_{s_{i+1}}-X_{s_i}}{s_{i+1}-s_i}$ is bounded, we are now interested in its direction. To that end, we write the Taylor expansion of $\xi \in C^{2,1}$:
	\begin{equation} 
	\begin{aligned}
	\xi(X_{s_{i}} ) &= \xi( X_{s_{i+1}}) + (X_{s_{i}}-X_{s_{i+1}})\cdot\na\xi(X_{s_{i+1}}) \\
	&\quad +  \frac12 \int_0^1 (X_{s_{i}}-X_{s_{i+1}})\cdot \na^2\xi(X_{s_{i+1}}+ t(X_{s_{i}}-X_{s_{i+1}})) \cdot (X_{s_{i}}-X_{s_{i+1}}) \d t .
	\end{aligned}
	\end{equation}
	Since $\xi$ cancels both at $X_{s_{i+1}}$ and $X_{s_i} \in \dO$, we get
	\begin{align*}
	\big| (X_{s_{i}}-X_{s_{i+1}})\cdot\na\xi(X_{s_{i+1}})\big| \leq \frac12 |X_{s_{i}}-X_{s_{i+1}} |^2 \lVert \na^2\xi\lVert_{L^\infty}.
	\end{align*}
	Together with \eqref{eq:Vneargrazing} and \eqref{eq:boundedXs} this yields 
	\begin{align*}
	\big| V_{s_{i+1}} \cdot \na\xi(X_{s_{i+1}}) \big|  \leq \frac12 \lVert \na^2\xi\lVert_{L^\infty} M^2 (s_{i}-s_{i+1}) + \frac12 \lVert E \lVert_{L^\infty} (s_{i}-s_{i+1}) .
	\end{align*}
	Furthermore, by definition of $\alpha$ \eqref{def:alphadeltakin}, $\alpha(s_{i+1},X_{s_{i+1}}V_{s_{i+1}}) = |V_{s_{i+1}} \cdot \na\xi(X_{s_{i+1}})|^2$ hence we get
	\begin{align*}
	 | s_{i}-s_{i+1} | \geq C \frac{ \sqrt{\alpha(s_{i+1},X_{s_{i+1}}V_{s_{i+1}})} }{M^2+\|E\|_{L^\infty}},
	\end{align*}
	with $C= C( \Omega)$. The result then follows from the Velocity Lemma \ref{lem:VelocityLemma} since $k\leq \sup_i \frac{\Delta}{s_i-s_{i+1}}$ by construction.
\end{proof}

\section{The Vlasov-Poisson system}  \label{sec:iterative}

We will construct a solution to the Vlasov-Poisson equation as a limit of an iterative sequence defined as follows: \\
For any $n\geq 0$ we consider an initial data $f^n_0$ satisfying
\begin{align} 
& f^n_0 \in C^{1,\mu}(\bO\times\RR^d),\quad f^n_0 \geq 0 \label{hyp:regf0} \\
& \supp f^n_0 \subset \subset \bO\times\RR^d  \label{hyp:compactsupportf0}\\
&f^n_0(x,v) = \mbox{constant}  \quad \mbox{ for all }(x,v) \mbox{ such that } \alpha_{n}(0,x,v) \leq \delta_0 \label{hyp:flatness} 
\end{align}
where $\delta_0>0$ is fixed, $\mu\in(0,1)$, and $\alpha_{n}$ is the $\delta$-kinetic distance defined in \eqref{def:alphadeltakin} using the field $E^{n}$ defined below, initiated with the stationary field $ E^0 (x) = -\na U^0$, $\Delta U^0 = -\int f_0 \d v$ with Dirichlet \eqref{eq:Dirichlet} or Neumann \eqref{eq:Neumann}-\eqref{eq:Neumanncompatibility} boundary conditions. Note that the fields $E^n$ are indeed regular enough for this kinetic distance to exist, see Corollary \ref{cor:iterative}. \\
We then define the sequences $f^n$ and $E^n$ for $n\geq 1$ as: 
\begin{equation} \label{eq:iterative} 
\left\{ \begin{aligned}
& \pa_t f^n + v\cdot \na_x f^n + E^{n-1} \cdot \na_v f^n = 0 & \quad \mbox { in } (0,T]\times\Omega\times\RR^d \\
& E^n(t,x) =  -\na U^n, \quad \Delta U^n = - \int_{\RR^d} f^n \d v  &\mbox{ in } (0,T]\times\Omega \\
& f^n(0,x,v) = f^{n-1}_0(x,v) & \mbox{ in } \Omega\times\RR^d
\end{aligned} \right.
\end{equation}
with the specular reflection boundary condition \eqref{eq:SR} for every $f_n$ and either the Dirichlet \eqref{eq:Dirichlet} or the Neumann boundary condition \eqref{eq:Neumann} for every $U_n$, with $h\in C^{1,\mu}(\dO)$ satisfying \eqref{eq:Neumanncompatibility} in the latter case.


\subsection{Well-posedness of the linear problem}

We prove well-posedness of \eqref{eq:iterative} in two steps. First we consider the Vlasov equation with a fixed electric field $E\in C^0_tC^{1,\mu}([0,T]\times\Omega)^d$ and prove existence and uniqueness of a solution $f\in \Cmup([0,T]\times\Omega\times\RR^d)$ for some $\mu'\in(0,\mu)$ in Theorem \ref{thm:LinearVlasov}. Secondly, classical elliptic PDE theory yields the converse, namely the existence and uniqueness of a solution $E\in \Cmutwo([0,T]\times\Omega)^d$ to the Poisson equation for a fixed $\rho \in C^1_tC^{0,\mu}([0,T]\times\Omega)$. Combining these two results, we finally state in Corollary \ref{cor:iterative} the well-posedness of \eqref{eq:iterative}. 

\begin{theorem}\label{thm:LinearVlasov} 
Consider $\Omega$ a $C^{2,1}$ domain and a fixed electric field $E\in C^0_tC^{1,\mu}([0,T]\times\bO)^d$, $\mu\in(0,1]$, satisfying $E(t,x)\cdot\na\xi(x) \geq C_0>0$ for all $t\in[0,T]$ and $x\in\dO$. For all $f_0$ satisfying \eqref{hyp:regf0}-\eqref{hyp:compactsupportf0} and \eqref{hyp:flatness} with the kinetic distance associated with the field $E$, there is a unique solution $f \in \Cmu([0,T]\times\Omega\times\RR^d)$ to the linear Vlasov equation
\begin{equation}\label{eq:linVlasov} 
\left\{ \begin{aligned} 
&  \pa_t f + v\cdot\na_x f + E\cdot\na_v f = 0 \qquad & \mbox{ in }(t,x,v)\in (0,T]\times\bO\times\RR^d ,\\
& \gamma_- f(t,x,v) = \gamma_+ f(t,x, \mathcal{R}_x v) & \mbox{ on } (t,x,v)\in(0,T]\times\gamma_- ,\\
&f|_{t=0} = f_0  & \mbox{ in } \Omega\times\RR^d .
\end{aligned} \right. 
\end{equation}
\end{theorem}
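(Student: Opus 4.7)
The natural approach is the method of characteristics. Given the fixed field $E\in C^0_tC^{1,\mu}$, the system \eqref{eq:characX}--\eqref{eq:characV}--\eqref{eq:characSR} admits a unique backward flow at every $(t,x,v)$ with $\alpha(t,x,v)>0$ by Lemma \ref{lem:regularitycharacflow}, so I define the candidate solution by
\[
f(t,x,v) := f_0\bigl(X(0;t,x,v),\, V(0;t,x,v)\bigr)
\]
wherever this makes sense, and extend it by the common constant value of $f_0$ along every backward trajectory that touches $\gamma_0$.

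To certify that this extension is consistent and continuous on all of $[0,T]\times\bO\times\RR^d$, I invoke the Velocity Lemma \ref{lem:VelocityLemma}. Let $C_T^+$ denote the uniform Gronwall constant obtained by combining \eqref{eq:isolationgamma0} with the a priori bound $|V(s;t,x,v)|\leq |v|+T\|E\|_{L^\infty}$, evaluated on the bounded velocity region containing the support of $f_0$. Then $\alpha(t,x,v)\leq\delta_0/C_T^+$ forces $\alpha\bigl(0,X(0;t,x,v),V(0;t,x,v)\bigr)\leq\delta_0$, so the flatness hypothesis \eqref{hyp:flatness} pins $f$ to the constant value of $f_0$ there; in particular the extension is unambiguous across $\gamma_0$ and $f$ is continuous, non-negative and bounded by $\|f_0\|_{L^\infty}$. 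The PDE then holds classically: outside the flatness region $f$ is constant along characteristics, so differentiating the formula yields $\partial_t f+v\cdot\nabla_x f+E\cdot\nabla_v f=0$; inside, $f$ is locally constant and the equation is trivial. The specular reflection condition is encoded into \eqref{eq:characSR} by construction.

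The core technical step is the $\Cmu$ regularity, which I handle by splitting the phase space at the threshold $\alpha(t,x,v)=\delta_0/C_T^+$. On the sub-threshold set $f$ is locally constant and no work is required. On the complement $\alpha$ is uniformly positive, and Lemma \ref{lem:regularitycharacflow} furnishes a uniform upper bound $N=N(T,\delta_0,\|E\|_{L^\infty})$ on the number of reflections along any backward trajectory starting in this set. On this good region I will show that the backward map $(t,x,v)\mapsto\bigl(X(0;t,x,v),V(0;t,x,v)\bigr)$ is $C^{1,\mu}$ by combining three ingredients: classical ODE regularity between consecutive reflection times, which provides $C^{1,\mu}$ flow regularity from $E\in C^{1,\mu}$; the implicit function theorem applied to $\xi\bigl(X(s;t,x,v)\bigr)=0$, whose transversality constant $|V_s\cdot\nabla\xi(X_s)|=\sqrt{2\alpha(s,X_s,V_s)}$ is bounded below by a positive constant via the Velocity Lemma, so each bounce time is a $C^{1,\mu}$ function of $(t,x,v)$; and the fact that the specular reflection map $(x,v)\mapsto(x,\Rx v)$ is $C^{1,\mu}$ because $\xi\in C^{2,1}$ gives $n=\nabla\xi/|\nabla\xi|\in C^{1,\mu}$. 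Composing at most $N$ such pieces and post-composing with $f_0\in C^{1,\mu}$ yields $f\in L^\infty_tC^{1,\mu}_{x,v}$; the continuity in time of $\partial_t f$ required for $f\in C^1_tC^0$ follows from the transport equation together with the continuity of $E$, $\nabla_x f$ and $\nabla_v f$.

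Uniqueness is immediate once the characteristic flow is known to be unique: any classical solution $\widetilde f$ to \eqref{eq:linVlasov} must be constant along the trajectories produced by Lemma \ref{lem:regularitycharacflow}, hence $\widetilde f(t,x,v)=f_0(X(0;t,x,v),V(0;t,x,v))=f(t,x,v)$. The hardest part of the proof is the regularity argument across reflection times, where the Hölder control of the flow has to be carried through a possibly large but finite chain of bounces without degenerating near $\gamma_0$; everything else is a careful interplay between the Velocity Lemma and the flatness hypothesis.
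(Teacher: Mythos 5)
Your proposal is correct and follows essentially the same route as the paper: representation along the reflected characteristic flow, the flatness hypothesis combined with the Velocity Lemma to reduce the regularity question to the set where $\alpha$ is uniformly bounded below, and then a composition of finitely many $C^{1,\mu}$ transport pieces and reflection maps using the uniform bound on the number of bounces from Lemma \ref{lem:regularitycharacflow}. If anything, you supply slightly more detail than the paper on the bounce-time regularity (the implicit function theorem with transversality $|V_s\cdot\nabla\xi|=\sqrt{2\alpha}$), which the paper leaves implicit.
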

\begin{proof} 
By assumption on $E$ and Lemmas \ref{lem:regularitycharacflow} and \ref{lem:isolatedgrazing} we know that $\alpha$ defined in \eqref{def:alphadeltakin} is a $\delta$-kinetic distance, that the grazing set is isolated and that the flow of transport $(X(s;t,x,v),V(s;t,x,v))$ is uniquely defined. Therefore, there is a unique solution $f$ to our system, which is given by the push-forward of the initial distribution along the flow of transport as expressed by the representation formula:
 \begin{align} \label{eq:representation}
f(t,x,v) = f_0 (X(0;t,x,v), V(0;t,x,v)) .
\end{align}
The key subject of this proof is then the regularity of $f$. Combining the representation formula with the flatness assumption \eqref{hyp:flatness} we see that $f(t,x,v)$ is constant if 
\begin{align*}
\alpha(0,X(0;t,x,v), V(0;t,x,v)) \leq \delta_0,
\end{align*}
and the Velocity Lemma \ref{lem:VelocityLemma} then means that $f(t,x,v)$ is constant if 
\begin{align*}
\alpha(t,x,v) \leq  \delta_0 e^{C_0\big[ (|v|+1)t + \lVert E\lVert_{L^\infty}t^2\big]} \leq \delta_0(T)
\end{align*}
with
\begin{align} \label{def:deltaT}
\delta_0(T) := \delta_0 e^{C_0\big[ (Q+1)T + \lVert E\lVert_{L^\infty}T^2\big]},  \quad Q =  \sup \lbrace |v|,  \, v\in \supp (f_0) \rbrace. 
\end{align}
As a consequence, it is enough to study the regularity of $f$ away from a neighbourhood of $\gamma_0$ where it is constant, i.e. on the set 
\begin{align} \label{def:setO} 
\mathcal{O} = \lbrace (t,x,v) \in [0,T]\times\Omega\times\RR^d:\, \alpha(t,x,v) \geq \delta_0(T) \rbrace .
\end{align}
For any $(t,x,v) \in \mathcal{O}$, we know from Lemma \ref{lem:isolatedgrazing} that the number of reflection $k$ that the trajectory $s\to (X(s;t,x,v),V(s;t,x,v))$ undergoes in the interval of time $[0,t]$ is bounded by  
\begin{align}  \label{eq:uniformKbound}
k \leq k_\delta(T) = TC_1\frac{(Q+T\|E\|_{L^\infty})^2+\|E\|_{L^\infty}}{\sqrt{\delta_0(T)}} e^{C_0 [ (Q+1) t + \| E\|_{L^\infty}t^2]}.
\end{align}
As a consequence the trajectory can be expressed as at most $k_\delta(T)$ compositions of transports inside the domain, governed by the ODEs \eqref{eq:characX}-\eqref{eq:characV}, and specular reflections on the boundary. By classical ODE theory we know that, since $E\in C^0_tC^{1,\mu}([0,T]\times\bO)^d$, the flow of transport inside the domain will be $C^1_tC^{1,\mu}([0,T]\times\Omega\times\RR^d))$. Moreover, at the boundary, by assumption we have $n(x) \in C^{1,1}(\dO)$ so the specular reflection operator $\mathcal{R}_x v = v-2(v\cdot n(x)) n(x)$ is $C^{1,1}(\Gamma_+)$. Thus, the entire flow $(X(s;t,x,v),V(s;t,x,v))$ is $\Cmu([0,T]\times\Omega\times\RR^d)$ for all $(t,x,v)\in\mathcal{O}$, which concludes the proof.  

\end{proof}

We conclude this section with the well-posedness of the sequences $f^n$ and $E^n$
\begin{cor} \label{cor:iterative}
Consider $\Omega$ a $C^{2,1}$ domain and an initial datum $f_0$ satisfying \eqref{hyp:regf0}-\eqref{hyp:compactsupportf0}-\eqref{hyp:flatness} with $\mu\in(0,1]$ . Then the sequences $f^n$ and $E^n$ given by \eqref{eq:iterative} are globally defined on $(0,T)\times\Omega\times\RR^d$ and, moreover, we have for any $T>0$:
\begin{align*}
&f^n \in \Cmu([0,T]\times\Omega\times\RR^d)\\
&E^n \in \Cmutwo([0,T]\times\Omega)^d\\
& \lVert f^n \lVert_{L^\infty(\Omega\times\RR^d)} = \lVert f_0 \lVert_{L^\infty(\Omega\times\RR^d)}, \quad \lVert f^n \lVert_{L^1(\Omega\times\RR^d)} = \lVert f_0 \lVert_{L^1(\Omega\times\RR^d)}
\end{align*}
\begin{proof}
We recall that by standard Elliptic regularity theory, see e.g. \cite[Chapter 6]{GilbargTrudinger} or \cite{Nardi13}, if $f^n \in \Cmu([0,T]\times\Omega\times\RR^d)$ and $\dO$ is $C^{2,1}$ then the field $E^n$ given by \eqref{eq:iterative} is in $\Cmutwo([0,T]\times\Omega)^d$. The well-posedness and the regularity of $f^n$ and $E^n$ follow directly by induction using Theorem \ref{thm:LinearVlasov} and this classical elliptic regularity result. The conservation of the $L^\infty$ norm follows from the representation formula \eqref{eq:representation} and the conservation of the $L^1$ norm follows from integrating the Vlasov equation in \eqref{eq:iterative} over $\Omega\times\RR^d$. 
\end{proof}

\end{cor}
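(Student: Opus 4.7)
The plan is induction on $n\geq 0$, alternating Theorem~\ref{thm:LinearVlasov} (the Vlasov step with a fixed field) and classical elliptic regularity (the Poisson step with a fixed density). The only delicate point will be verifying at each step that the field produced by the Poisson equation satisfies the structural sign condition $E\cdot\nabla\xi\geq C_0>0$ on $\dO$ required by the Velocity Lemma and, hence, by Theorem~\ref{thm:LinearVlasov}.

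For the base case $n=0$, I would observe that $\rho_0=\int_{\RR^d}f_0\,dv\in C^{1,\mu}(\bO)$ has compact support by \eqref{hyp:regf0}--\eqref{hyp:compactsupportf0}, so standard Schauder theory on the $C^{2,1}$ domain $\Omega$ (cf.\ \cite[Ch.~6]{GilbargTrudinger} for Dirichlet, \cite{Nardi13} for Neumann with data $h\in C^{1,\mu}(\dO)$ compatible with \eqref{eq:Neumanncompatibility}) produces $E^0\in C^{2,\mu}(\bO)^d\hookrightarrow\Cmutwo$ trivially, since $E^0$ is time-independent.

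For the inductive step, suppose $E^{n-1}\in\Cmutwo$. The boundary sign condition on $E^{n-1}$ can be established as follows: in the Dirichlet case, $U^{n-1}\geq 0$ in $\Omega$ by the maximum principle (since $-\Delta U^{n-1}=\rho^{n-1}\geq 0$ and $U^{n-1}=0$ on $\dO$), so Hopf's lemma yields $\partial_n U^{n-1}<0$ uniformly on $\dO$ and thus $E^{n-1}\cdot\nabla\xi=-|\nabla\xi|\,\partial_n U^{n-1}\geq C_0>0$; in the Neumann case it reduces to the sign of $h$, implicit in the formulation. With this in hand, Theorem~\ref{thm:LinearVlasov} produces a unique $f^n\in\Cmu([0,T]\times\Omega\times\RR^d)$ via the representation formula \eqref{eq:representation}. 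The boundedness $\|E^{n-1}\|_{L^\infty}<\infty$ keeps the velocity support of $f^n$ bounded by $Q+T\|E^{n-1}\|_{L^\infty}$ uniformly on $[0,T]$, so integrating in $v$ over this compact set gives $\rho^n\in C^1_tC^{1,\mu}(\bO)$; reapplying the elliptic regularity result then upgrades $\rho^n$ to $E^n\in\Cmutwo$, closing the induction.

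The conservation laws will follow from structural properties already available: $\|f^n\|_{L^\infty}=\|f_0\|_{L^\infty}$ is immediate from the representation formula \eqref{eq:representation}, and $\|f^n\|_{L^1}=\|f_0\|_{L^1}$ follows from integrating the Vlasov equation over $\Omega\times\RR^d$ and observing that the boundary flux terms cancel thanks to the specular reflection condition \eqref{eq:SR}, which preserves the measure $|v\cdot n(x)|\,d\sigma(x)\,dv$ on $\gamma_+\cup\gamma_-$. The main obstacle is really the sign condition above at each iteration; once secured, the bootstrap between Theorem~\ref{thm:LinearVlasov} and Schauder estimates is routine.
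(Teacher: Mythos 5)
Your proposal is correct and follows essentially the same route as the paper: induction alternating Theorem~\ref{thm:LinearVlasov} with classical Schauder estimates, the representation formula \eqref{eq:representation} for the $L^\infty$ conservation, and integration of the Vlasov equation (with the measure-preserving property of specular reflection) for the $L^1$ conservation. Your extra care in verifying the sign condition $E^{n-1}\cdot\nabla\xi\geq C_0>0$ via the maximum principle and Hopf's lemma is a point the paper only addresses later, in the proof of Theorem~\ref{thm:mainVP}, so it is a welcome addition rather than a deviation.
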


\subsection{Compactness and convergence with bounded velocity} 

In this section we will prove compactness of the sequences $f^n$, $E^n$ under the assumption of bounded velocity support. To that end, we introduce 
\begin{align} \label{def:Qn}
Q^n(t) = \sup \lbrace |v| : (x,v)\in \supp f^n(s),\, 0\leq s\leq t \rbrace
\end{align}
and we shall assume in this section that for all $t\in[0,T]$, $Q^n(t)<K=K(T)$ uniformly in $n$. 
\begin{prop} \label{prop:uniformcontrols} 
Consider $\Omega$ a $C^{2,1}$ domain, and an initial datum $f_0$ satisfying \eqref{hyp:regf0}-\eqref{hyp:compactsupportf0}-\eqref{hyp:flatness} with $\mu\in(0,1]$. Assume that for all $n\in\mathbb{N}$, $Q^n(t) \leq K$. Then, for some $n_0>0$, the sequences $f^n$ and $E^n$ given by \eqref{eq:iterative} satisfy for all $n\geq n_0$: 
\begin{align*}
\lVert E^n \lVert_{C^1_tC^{2,\mu}_x} \leq C(T) \\
\lVert f^n \lVert_{C^1_tC^{1,\mu}_{x,v}} \leq C(T). 
\end{align*}
where $C(T)$ depends only on $T$, $K$ and $\lVert f_0\lVert_{L^\infty}$. 
\end{prop}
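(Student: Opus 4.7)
The plan is to revisit the proof of Theorem~\ref{thm:LinearVlasov} and show that every constant appearing there can be made uniform in $n$. Since $f^n$ is the pull-back of $f^{n-1}_0$ along the characteristic flow driven by $E^{n-1}$, its $C^{1,\mu}$ norm is controlled by (i) the $C^{1,\mu}$ norm of the initial data $f^{n-1}_0$, (ii) the $C^{0,\mu}$ norm of the field $E^{n-1}$, which governs the smooth transport segments, and (iii) the reflection count from Lemma~\ref{lem:regularitycharacflow}, which depends on the lower bound $\delta_0(T)$ of \eqref{def:deltaT}. The first task is therefore to secure uniform-in-$n$ versions of (ii) and (iii).

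By Corollary~\ref{cor:iterative} we have $\lVert f^n \lVert_{L^\infty} = \lVert f_0 \lVert_{L^\infty}$. Combined with the hypothesis $Q^n(t) \leq K$, this yields the uniform pointwise bound $\lVert \rho^n \lVert_{L^\infty} \leq C K^d \lVert f_0 \lVert_{L^\infty}$. Standard elliptic regularity for the Poisson equation on the $C^{2,1}$ domain $\Omega$, with Dirichlet or Neumann data, then gives uniform $C^{0,\alpha}_x$ bounds on $E^n$ for every $\alpha\in(0,1)$. Moreover, in the Dirichlet case Hopf's lemma applied to the superharmonic function $U^n\geq 0$ provides a uniform positive lower bound $E^n\cdot\na\xi \geq C_0 > 0$ on $\dO$; in the Neumann case this follows from $E^n\cdot n = -h$ and the sign hypothesis on $h$. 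With these bounds, the Velocity Lemma~\ref{lem:VelocityLemma} produces kinetic distances $\alpha_n$ with defining constants uniform in $n$, so $\delta_0(T)$ in \eqref{def:deltaT} is uniformly positive and the reflection count \eqref{eq:uniformKbound} is bounded by some $K_0(T)$ uniform in $n$.

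We now obtain the full $\Cmu$ estimate on $f^n$. On the set $\mathcal{O}$ of \eqref{def:setO}, the characteristic flow $(X^n,V^n)$ is a composition of at most $K_0(T)$ pieces of smooth ODE transport alternating with specular reflections. Differentiating \eqref{eq:characX}-\eqref{eq:characV} in $(x,v)$ gives a linear variational system with coefficients controlled by $\lVert \na E^{n-1}\lVert_{L^\infty}$; Gronwall yields a $C^{1,\mu}$ bound on each transport piece, and concatenation with the $C^{1,1}$ reflection map $\Rx$ (using the $C^{2,1}$ geometry of $\dO$) gives, after at most $K_0(T)$ compositions, a uniform $C^{1,\mu}$ bound on $(X^n,V^n)$. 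Composing with $f^{n-1}_0$, assumed uniformly bounded in $C^{1,\mu}$, yields a uniform $\Cmu$ bound on $f^n$; outside $\mathcal{O}$, $f^n$ is constant by the flatness \eqref{hyp:flatness} and is trivially controlled. Feeding this back, $\rho^n$ is uniformly bounded in $C^0_t C^{1,\mu}_x$, and classical Schauder estimates for the Poisson equation produce the uniform $C^0_t C^{2,\mu}_x$ bound on $E^n$. Time regularity is then extracted directly from the equations: $\pa_t f^n = -v\cdot\na_x f^n - E^{n-1}\cdot\na_v f^n$ inherits a uniform $C^{0,\mu}_{x,v}$ bound, and $\pa_t E^n = -\na \pa_t U^n$ solves a Poisson equation with source $\na_x\cdot j^n$, where $j^n = \int v f^n \d v$ is uniformly bounded in $C^{1,\mu}_x$.

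The main obstacle is the circular structure of the second paragraph: the uniform lower bound $\delta_0(T) > 0$, and hence the whole chain of estimates, depends on a uniform lower bound of $E^{n-1}\cdot\na\xi$ on $\dO$, which in the Dirichlet case is tied to quantitative control on $\rho^n$ near the boundary and on the $C^{1,\alpha}$ norm of $U^n$. This is why the statement allows for a starting index $n_0$: only after finitely many iterations do these estimates stabilise and the bootstrap close uniformly. A secondary, purely technical point is to verify that the flatness \eqref{hyp:flatness} of $f^{n-1}_0$ with respect to $\alpha_{n-1}$ transfers to flatness with respect to $\alpha_n$, which follows from the uniform two-sided equivalence of successive kinetic distances produced in the first step.
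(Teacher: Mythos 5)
Your overall architecture matches the paper's: uniform $L^\infty$ control of $\rho^n$ from $Q^n\leq K$ and conservation of the $L^\infty$ norm, elliptic regularity for $E^n$, regularity of the flow on the set where the reflection count is uniformly bounded, and a bootstrap back through the Poisson equation. However, there is a genuine gap at the central step. In your third paragraph you differentiate the characteristic system in $(x,v)$ and run Gr\"{o}nwall on the variational equations ``with coefficients controlled by $\lVert\na E^{n-1}\rVert_{L^\infty}$''. At that stage the only uniform-in-$n$ information on $E^{n-1}$ is what follows from $\lVert\rho^{n-1}\rVert_{L^\infty}\leq K^d\lVert f_0\rVert_{L^\infty}$, namely a $C^{0,\alpha}$ (equivalently, log-Lipschitz) bound; a uniform $L^\infty$ bound on $\na E^{n-1}$ is precisely part of what the proposition asserts, so invoking it to control the variational system is circular. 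Each individual $E^{n-1}$ is of course $C^{2,\mu}$ by Corollary~\ref{cor:iterative}, but with constants that could a priori grow with $n$.

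The paper closes this gap with an intermediate H\"{o}lder bootstrap that your proof skips. From the uniform $L^\infty$ bound on $\rho^n$ one only gets that $E^n$ is log-Lipschitz uniformly in $n$; an Osgood--Gr\"{o}nwall argument on $Y^{n+1}(s)=|X^{n+1}(s;t,x,v)-X^{n+1}(s;t,y,w)|+|V^{n+1}(s;t,x,v)-V^{n+1}(s;t,y,w)|$ then yields only a uniform $C^{0,\gamma}$ modulus for the flow between reflections, with $\gamma=e^{-C(t-s)}<1$, further degraded to $\gamma'\leq\gamma^{k_\delta(T)}$ after composing with the at most $k_\delta(T)$ reflections allowed on $\mathcal{O}_n$. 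This gives $\rho^{n+1}$ uniformly in $C^{0,\gamma'}$, hence $E^{n+1}$ uniformly in $C^{1,\gamma'}$ by Schauder theory, and only at the next rank can one run the $C^1$ estimates of Theorem~\ref{thm:LinearVlasov} with uniform constants. This two-step gain of regularity --- not the lower bound on $E^{n}\cdot\na\xi$ that you single out --- is what forces the statement to start at some $n_0$. (Your observation that the uniform positivity of $E^n\cdot\na\xi$ on $\dO$ in the Dirichlet case deserves a quantitative justification is fair, but it is a separate issue from the one that drives the structure of this proof.)
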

\begin{proof}
From the uniform bound on $Q^n(t)$ and the conservation of $L^\infty$ norm in Corollary \ref{cor:iterative} we have 
\begin{align*}
\lVert \rho^n(t) \lVert_{L^\infty(\Omega)}  &= \sup_\Omega \int_{\RR^d} f(t,x,v)\d v \\
&\leq \lVert f(t) \lVert_{L^\infty(\Omega\times\RR^d} Q^n(t)^d \\
&\leq K^d \lVert f_0 \lVert_{L^\infty(\Omega\times\RR^d)}. 
\end{align*}
Hence, by classical elliptic regularity, $E^n$ is log-Lipschitz uniformly in $n$. This allows for uniform estimates on $\rho^{n+1}$ in $C^{0,\gamma}(\Omega)$ for some $\gamma<1$. Indeed, we can consider $v,w\in\RR^d$, $x,y\in\Omega$ and $s_*^{n+1}\in[0,t]$ such that for all $s\in (s_*^{n+1},t)$ trajectories $X^{n+1}(s;t,x,v)$ and $X^{n+1}(s;t,y,v)$ do not undergo any reflections on the boundary. Then, introducing 
\begin{align*}
Y^{n+1}(s) = | X^{n+1}(s;t,x,v) - X^{n+1}(s;t,y,w) | + |V^{n+1}(s;t,x,v) - V^{n+1}(s;t,y,w) | 
\end{align*}
we have, using the characteristic equations
\begin{align*}
| \dot{Y}^{n+1}(s)| &\leq |V^{n+1}(s;t,x,v) - V^{n+1}(s;t,y,w) | + |E^n(s,X(s;t,x,v)) - E^n(s;X(s;t,y,w))| \\
&\leq Y^{n+1}(s) - C | X^{n+1}(s;t,x,v) - X^{n+1}(s;t,y,w) | \log \big(| X^{n+1}(s;t,x,v) - X^{n+1}(s;t,y,w) |\big)\\
&\leq CY^{n+1}(s)  \big|  \log Y^n(s)\big| 
\end{align*}	
hence 
\begin{align*}
Y^{n+1}(s) \leq e^{\log Y^{n+1}(t)e^{-C(t-s)}}  \leq( Y^{n+1}(t))^\gamma \leq |x-y|^\gamma + |v-w|^\gamma 
\end{align*}
for all $\gamma \leq e^{-C(t-s^{n+1}_*)}$. We have proved that the flow $(X^{n+1}(s;t,x,v),V^{n+1}(s;t,x,v))$ is uniformly bounded in some $C^{0,\gamma}(\Omega\times\RR^d)$ for $s\in (s^{n+1}_*,t)$.\\ 
Analogously to the proof of Theorem \ref{thm:LinearVlasov} we introduce the set 
\begin{align} \label{def:setOn}  
\mathcal{O}_{n} = \lbrace (t,x,v) \in [0,T]\times\Omega\times\RR^d : \alpha_{n}(x,v) \geq \delta(T) \rbrace
\end{align}
with $\delta(T)$ given by \eqref{def:deltaT} and we know that $f^{n+1}$ is constant on $(\bO\times\RR^d)\setminus \mathcal{O}_n$ so it is enough to study its regularity on $\mathcal{O}_n$. \\
Note that $\delta(T)$ does not depend on $n$ thanks to the uniform bounds on $Q^n(t)$ and $\| E^n\|_{L^\infty}$. Similarly for any $(t,x,v)\in\mathcal{O}_n$ we can also bound the number of reflections within the interval $(0,t)$ uniformly in $n$ with $k_\delta(T)$ given by \eqref{eq:uniformKbound}. Hence, the flow $(X^{n+1}(s;t,x,v),V^{n+1}(s;t,x,v))$ can be expressed as at most $k_\delta(T)$ compositions of transports in $C^{0,\gamma}(\Omega\times\RR^d)$ and specular reflections, hence the flow is in $C^{0,\gamma'}(\Omega\times\RR^d)$ with $\gamma'\leq \gamma^{k_\delta(T)} \leq e^{-k_\delta(T)CT}$. Combined with the representation formula \eqref{eq:representation} this yields a uniform bound of $f^{n+1}$, and in turns $\rho^{n+1}$, in $C^{0,\gamma'}(\Omega\times\RR^d)$ and $C^{0,\gamma'}(\Omega)$ respectively. \\
Consequently, $E^{n+1}$ will be in $C^{1,\gamma'}(\Omega)^d$ by classical elliptic regularity, which means the system \eqref{eq:iterative} at rank $n+2$ satisfies the assumptions of Theorem \ref{thm:LinearVlasov} and our proposition then follows by iteration. 

\end{proof}

\begin{remark}
Note that the limiting factor for the regularity is, in fine, the specular reflection operator which stops the flow of transport from reaching any regularity above $C^{1,1}$. 
\end{remark}

Using the uniform bounds derived in the previous section, we now state the following convergence result. 
\begin{prop} \label{prop:limitsolutions}
Consider $\Omega$ a $C^{2,1}$ domain, and an initial datum $f_0$ satisfying \eqref{hyp:regf0}-\eqref{hyp:compactsupportf0}-\eqref{hyp:flatness} with $\mu\in(0,1]$. Assume that for all $n\in\mathbb{N}$, $Q^n(t) \leq K$. Then as $n\to \infty$: 
\begin{align*}
(f^n,E^n) \to (f,E) \quad \mbox{in } C^{\nu}_tC^{1,\mu}([0,T]\times\Omega\times\RR^d) \times C^\nu_tC^{2,\mu}([0,T]\times\Omega)^d
\end{align*}
with $0<\nu<1$. Moreover the limits $f$ and $E$ are in $\Cmu([0,T]\times\Omega\times\RR^d)$ and $C^1_tC^{2,\mu}([0,T]\times\Omega)^d$ respectively, and $(f,E)$ is solution to the Vlasov-Poisson system \eqref{eq:VP}-\eqref{eq:SR} with either the Dirichlet \eqref{eq:Dirichlet} or the Neumann \eqref{eq:Neumann}-\eqref{eq:Neumanncompatibility} boundary condition.
\end{prop}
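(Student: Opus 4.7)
\emph{Step 1: compactness.} By Proposition \ref{prop:uniformcontrols}, the sequences $f^n$ and $E^n$ are uniformly bounded in $\Cmu$ and $C^1_tC^{2,\mu}$ respectively, and the supports of the $f^n$ are contained in the fixed compact set $[0,T]\times\bar{\Omega}\times\bar{B}_K$ thanks to $Q^n(t)\leq K$. Combining the compact embedding $C^{1,\mu}\hookrightarrow C^{1,\mu'}$ on compact sets with the equi-continuity in time furnished by the $C^1_t$ bound, Arzel\`a--Ascoli produces a subsequence (relabelled) and limits $(f,E)$ with
\begin{align*}
f^n \to f \ \text{in}\ C^\nu_t C^{1,\mu'}\big([0,T]\times\bar{\Omega}\times\RR^3\big), \qquad
E^n \to E \ \text{in}\ C^\nu_t C^{2,\mu'}\big([0,T]\times\bar{\Omega}\big)^3,
\end{align*}
for any $\mu' \in (0,\mu)$ and some $\nu\in(0,1)$.

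\emph{Step 2: passing to the limit.} The uniform convergence of $\nabla_v f^n$ together with that of the shifted field $E^{n-1}$ yields uniform convergence of the product $E^{n-1}\cdot\nabla_v f^n \to E\cdot\nabla_v f$; combined with the convergence of $v\cdot\nabla_x f^n$, the Vlasov equation in \eqref{eq:iterative} forces $\partial_t f^n$ to converge uniformly as well, so $(f,E)$ satisfies the Vlasov equation of \eqref{eq:VP}. Meanwhile $\rho^n = \int f^n \,\d v$ converges uniformly on $[0,T]\times\bar{\Omega}$; classical elliptic regularity for the Poisson problem with the prescribed Dirichlet or Neumann datum then upgrades this to $U^n\to U$ in $C^{2,\mu'}$, yielding $E=-\nabla U$ and $\Delta U = -\rho$. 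The specular reflection identity \eqref{eq:SR} and the boundary condition on $U$ are preserved in the limit by continuity of the trace.

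\emph{Step 3: regularity of the limits and full-sequence convergence.} The uniform bounds from Proposition \ref{prop:uniformcontrols}, combined with the pointwise convergence of $\nabla f^n$ and of $\nabla^2 E^n$, imply via lower semicontinuity of H\"older seminorms that $f\in \Cmu$ and $E\in C^1_tC^{2,\mu}$, as claimed. To promote the subsequence convergence above to convergence of the entire sequence, the natural plan is a contraction argument: subtracting \eqref{eq:iterative} at consecutive indices produces a linear transport equation for $f^{n+1}-f^n$ with source $(E^n-E^{n-1})\cdot\nabla_v f^{n+1}$, whose characteristics inherit the log-Lipschitz control from the proof of Proposition \ref{prop:uniformcontrols}; coupled with the elliptic estimate $\|E^n - E^{n-1}\|\lesssim \|\rho^n - \rho^{n-1}\|$ and a Gr\"onwall bound along characteristics, this gives a contraction on a sufficiently short interval $[0,\tau]$, which iterates to cover $[0,T]$.

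\emph{Main obstacle.} The most delicate bookkeeping concerns the kinetic distance near the grazing set $\gamma_0$: the flatness assumption \eqref{hyp:flatness} invoked at rank $n+1$ is formulated with respect to $\alpha_n$, which depends on $E^n$ through \eqref{def:alphadeltakin}, so one must verify that convergence of $E^n$ propagates to convergence of the kinetic distances and therefore to stabilisation of the sets $\mathcal{O}_n$ from \eqref{def:setOn} that govern the regularity argument of Proposition \ref{prop:uniformcontrols}. The uniform $C^{1,\mu}$ bound on $E^n$ makes this manageable, but it is the point where the analysis near $\gamma_0$ enters the compactness argument most directly and must be handled carefully to ensure that the limit $f$ still enjoys the flatness needed to apply Lemma \ref{lem:VelocityLemma} to the limiting field.
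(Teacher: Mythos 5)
The paper does not actually write out a proof of this proposition: it observes that no geometric input is needed and defers to \cite[Proposition 3]{HwangVelazquez10}, whose argument (uniform bounds from Proposition \ref{prop:uniformcontrols}, a Cauchy/contraction estimate for the whole sequence in a weak norm, interpolation, passage to the limit) is essentially what you reconstruct. One genuine structural issue in your write-up: the Vlasov equation at rank $n$ in \eqref{eq:iterative} is driven by the \emph{shifted} field $E^{n-1}$, and extracting a subsequence along which $(f^{n},E^{n})$ converges gives no control of $E^{n-1}$ along that subsequence; even after a further extraction, the limit $f$ would a priori solve a Vlasov equation with some field $\tilde E$ that need not coincide with the Poisson field generated by $\rho=\int f \d v$. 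Consequently your Step 2 cannot be closed from Step 1 alone: the contraction/Gr\"onwall argument of your Step 3 (differences $f^{n+1}-f^{n}$, $E^{n}-E^{n-1}$, log-Lipschitz characteristics) must be run \emph{first}, in a weak norm such as $C^0$ or $L^1$, to obtain convergence of the entire sequence; only then does the index shift become harmless and the limit equation identify correctly. Since that step is already present in your proposal, the gap is one of ordering rather than of substance. Two smaller remarks: compactness (or interpolation between $C^0$ convergence and the uniform $C^{1,\mu}$ bounds) only yields convergence in $C^{1,\mu'}$ for $\mu'<\mu$, with the limit itself in $\Cmu$ by lower semicontinuity of the H\"older seminorms, which is consistent with the exponent loss in Theorem \ref{thm:mainVP}; and your concern about the $n$-dependence of the kinetic distances, while sensible, is not needed for this proposition, because the uniform regularity of each $f^{n}$ is already secured by Proposition \ref{prop:uniformcontrols} and the flatness of the limit near the grazing set of the limiting field only matters in the subsequent sections.
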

Unlike the previous results of this section, this proof does not rely on any geometrical considerations but rather on some standard functional analysis so we refer to \cite[Proposition 3]{HwangVelazquez10} for the proof of this Proposition.
%

\subsection{Global bound on $Q(t)$ in dimension 3}  \label{subsec:boundonQt}

We now restrict ourselves to the 3-dimensional case. The purpose of this section is to remove the assumption of bounded velocity support by proving that if $f_0$ is compactly supported in velocity \eqref{hyp:compactsupportf0VP}, i.e. $Q(0)<K$, then 
\begin{align} \label{def:Qt} 
Q(t) =  \sup \lbrace |v| : (x,v)\in \supp f(s),\, 0\leq s\leq t \rbrace
\end{align}
is uniformly bounded on $[0,T]$.
\begin{prop} \label{prop:Pfaffelmoser} 
Consider $\Omega$ a $C^{2,1}$ domain, an initial datum $f_0$ satisfying \eqref{hyp:regf0}-\eqref{hyp:compactsupportf0}-\eqref{hyp:flatness} with $\mu\in(0,1]$ and the solution $(f,E)\in \Cmu([0,T]\times\Omega\times\RR^3)\times C^1_tC^{2,\mu}([0,T]\times\Omega)^3$ of the Vlasov-Poisson system given by Proposition \ref{prop:limitsolutions}. Then there exists $K(T)<\infty$ depending only on $T$ and $f_0$ such that 
\begin{align*}
Q(t) \leq K(T) \qquad \mbox{ for all } t\in [0,T]. 
\end{align*}
\end{prop}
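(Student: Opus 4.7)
The plan is to adapt the classical Pfaffelmoser--Schaeffer argument \cite{Pfaffelmoser1992,Schaeffer91} to the bounded domain setting. The key preliminary observation is that specular reflections preserve $|v|$, so the only mechanism for growth of $Q(t)$ is the action of the electric field along continuous pieces of the characteristic curves. Fixing $(x,v)\in\supp f(t)$ with $|v|$ close to $Q(t)$ and the trajectory $s\mapsto (X(s),V(s))=(X(s;t,x,v),V(s;t,x,v))$, one has
\begin{align*}
|V(t)| \leq |V(t-\Delta)| + \int_{t-\Delta}^{t} |E(s,X(s))|\,\d s
\end{align*}
for any $\Delta>0$, so the task reduces to bounding the right-hand integral by $C(T)(1+Q(t)^{\beta})\Delta$ for some $\beta<1$, from which an iteration over intervals of length $\Delta$ delivers the uniform bound $Q(t)\leq K(T)$.

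By classical elliptic potential theory, the electric field decomposes via the Green's function of $-\Delta$ on $\Omega$ with the prescribed boundary condition as
\begin{align*}
E(s,z) = -\int_\Omega \frac{z-y}{4\pi|z-y|^3}\,\rho(s,y)\,\d y + R(s,z),
\end{align*}
where $R$ is uniformly bounded on $[0,T]\times\bO$: the boundary part of the Green's function is smooth by the $C^{2,1}$ regularity of $\dO$, the conservation of $\lVert f\lVert_{L^1}$ from Corollary \ref{cor:iterative} controls the size of the convolution, and in the Neumann case the compatibility condition \eqref{eq:Neumanncompatibility} absorbs the flux $h$. The contribution of $R$ to the time integral is at most $C(T)\Delta$ and is harmless. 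The remaining Newtonian part is handled exactly as in the whole space.

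Writing $\rho(s,y)=\int f(s,y,w)\,\d w$ and introducing the relative velocity $u=w-V(s;t,x,v)$, I decompose the $(s,y,w)$ integration using two parameters $P$ and $Q^*$. For $|u|\leq P$, the bound $f\leq \lVert f_0\lVert_{L^\infty}$ and a direct volume estimate of the singular kernel yield a contribution of order $P^{2}\Delta$. For $P<|u|\leq Q^*$, I change variables along the characteristic $s\mapsto(Y(s;s_0,y,w),W(s;s_0,y,w))$ issuing from $(y,w)$ at time $s_0=t-\Delta$; this change is measure-preserving between reflections by Liouville's theorem and is preserved across each specular reflection by the orthogonality of $\Rx$, and it converts the time integral of $|X(s)-Y(s)|^{-2}$ into a quantity controlled by $|u|^{-1}$, yielding a logarithmic-in-$Q^*/P$ contribution. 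For $|u|>Q^*$, combining $f\leq \lVert f_0\lVert_{L^\infty}$ with a kinetic moment argument on the support of $f$ gives a contribution decaying in $Q^*$. Optimizing $P$ and $Q^*$ as suitable powers of $Q(t)$ produces the desired estimate with exponent $\beta<1$.

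The main obstacle will be ensuring the validity of the change of variables in the middle regime in the presence of reflections. The Velocity Lemma \ref{lem:VelocityLemma}, combined with the reflection-count bound of Lemma \ref{lem:regularitycharacflow}, guarantees that, for any trajectory starting in $\supp f$ -- which remains bounded away from $\gamma_0$ thanks to the flatness assumption \eqref{hyp:flatness} and the propagation of the kinetic distance -- the number of reflections in $[0,T]$ is uniformly bounded, depending only on $T$, $\|E\|_{L^\infty}$, and the lower bound on $\alpha$. This allows each trajectory to be decomposed into a uniformly bounded number of smooth arcs on each of which Liouville's theorem and the standard change of variables apply; patching across reflections costs only a multiplicative constant depending on $T$. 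One must also verify that the relative-velocity shell decomposition is compatible with the action of $\Rx$, which holds because $\Rx$ is an isometry, mapping shells to shells up to an orthogonal transformation that preserves the $(y,w)$-volume.
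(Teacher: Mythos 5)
Your overall architecture (Pfaffelmoser splitting, Green's function decomposition with a bounded remainder, measure preservation of the flow across reflections, good/medium/large relative-velocity regimes) matches the paper's, and the good and bad regimes are essentially fine. But there is a genuine gap in the middle regime, which is precisely where the bounded-domain difficulty lives. The time-integral estimate there requires that the relative velocity stays comparable over the whole window, i.e.\ that $\mathcal{W}(s):=V(s;t,x,v)-v$ satisfies $|\mathcal{W}(s)|\leq \tfrac14 P$ on $[t-\Delta,t]$, so that $|\dot Z(s)|=|V(s)-\hat V(s)|\gtrsim |v-\hat V(t)|$ and the substitution $\tau=|s-s_0|\,|v-\hat V(t)|$ is legitimate. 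In the whole space this follows from $|\mathcal{W}(s)|\leq\int|E|\lesssim Q^{4/3}\Delta$. In a bounded domain, each specular reflection preserves $|V|$ but not $V(s)-v$: the jump of $|\mathcal{W}|$ at a reflection time $\tau$ is $2|V(\tau^-)\cdot n(X(\tau))|$, which a priori can be of order $Q$. Your argument that the number of reflections is uniformly bounded and that ``patching across reflections costs only a multiplicative constant'' therefore does not close this step: a bounded number of $O(Q)$ jumps destroys the required bound $|\mathcal{W}(s)|\leq\tfrac14 P$ with $P=Q^\eta$, $\eta<1$. Likewise, the observation that $\Rx$ is an isometry mapping shells to shells is beside the point, because the reference trajectory $\hat X$ and the trajectory being integrated reflect at different times and places, so the relative velocity is genuinely discontinuous.

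The missing idea is the quantitative trade-off between the \emph{size} of each jump and the \emph{number} of jumps. The Velocity Lemma \ref{lem:VelocityLemma} gives $|V(\tau)\cdot n(X(\tau))|\lesssim\sqrt{\alpha(t,x,v)}$ (up to the exponential factor), while Lemma \ref{lem:regularitycharacflow} gives $k\lesssim Q^2\Delta/\sqrt{\alpha(t,x,v)}$; the product of the two is $\lesssim Q^2\Delta$, \emph{independently of $\alpha$}, so
\begin{align*}
|\mathcal{W}(s)|\;\lesssim\; Q^{4/3}\Delta \;+\; k\,\sqrt{\alpha(t,x,v)}\;\lesssim\; Q^{2}\Delta ,
\end{align*}
and choosing $\Delta=c_0Q^{-\gamma}$ with $1<\gamma\leq 2-\eta$ yields $|\mathcal{W}(s)|\leq\tfrac14 P$ (this is \eqref{eq:Ws1ref}--\eqref{eq:Ws} in the paper, together with a small extension of $\alpha$ off the boundary layer $\dO_\delta$). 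Heuristically: many reflections force the trajectory to be nearly grazing, and nearly grazing reflections barely deflect the velocity. Without this cancellation your middle-regime estimate, and hence the sublinear bound on $(Q(t)-Q(t-\Delta))/\Delta$, does not follow. You would also need to check that the final exponent is sublinear after replacing your $P^{2}\Delta$ bound in the small-$|u|$ regime (which, as stated, a direct volume count only gives as $P^{3}\Delta$ unless you invoke the kinetic-energy/$L^{5/3}$ interpolation as the paper does to get $\Delta P^{4/3}$).
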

\begin{proof} 
We prove this proposition via an estimation of the acceleration of the velocity along a characteristic trajectory. To that end let us fix a trajectory $(\hat{X}(t),\hat{V}(t))$. We wish to control, for some $\Delta>0$:
\begin{align}
\int_{t-\Delta}^t | E(s,\hat{X}(s)) | \d s &\leq \int_{t-\Delta}^t \iint_{\Omega\times\RR^3} \frac{f(s,y,w)}{|y-\hat{X}(s)|^2} \d y \d w \d s + C \Delta  \| h \|_{L^\infty} \label{eq:Pfaffelyw}\\
&\leq \int_{t-\Delta}^t \iint_{\Omega\times\RR^3} \frac{f(t,x,v)}{|X(s;t,x,v)-\hat{X}(s)|^2} \d x \d v \d s + C \Delta  \| h \|_{L^\infty}  \label{eq:Pfaffelxv}
\end{align}
where we used the fact that the evolution of the characteristic trajectories is Hamiltonian, hence $\d y \d w = \d X(t;s,y,w) \d V(t;s,y,w) = \d x\d v$. \\
Following the approach of Pfaffelmoser \cite{Pfaffelmoser1992}, we will split this integral in three parts. We introducing three parameters $\eta,\beta,\gamma>0$ to be determined later on, and define $P=Q^\eta$, $R=Q^\beta$ and $\Delta = c_0Q^{-\gamma}$, with $c_0>0$ fixed. We then split the domain of integration as follows:
\begin{align}
& G= \lbrace (s,x,v) \in [t-\Delta,t]\times\Omega\times\RR^3 : \, |v|<P \mbox{ or } |v-\hat{V}(t) |<P \rbrace \\
& B = \lbrace (s,x,v) \in  ([t-\Delta,t]\times\Omega\times\RR^3) \setminus G :\, |X(s;t,x,v)-\hat{X}(s)| < \eps_0(v) \rbrace \\
& U = \lbrace (s,x,v) \in  ([t-\Delta,t]\times\Omega\times\RR^3) \setminus G :\, |X(s;t,x,v)-\hat{X}(s)| > \eps_0(v) \rbrace 
\end{align}
with
\begin{align} \label{def:eps0}
 \eps_0(v) = \max\left\{ \frac{R}{|v|^3} , \frac{R}{|v-\hat{V}(t)|^3}  \right\}. 
\end{align}
We shall now handle each part of the integration individually. Throughout this section, we are mostly interested in bounding each part of the integral with respect to powers of $Q$ and the constants in these bounds will not play a role so we introduce the notation $a \lesssim b$ to denote $a\leq Cb$ for some constant $C$ independent of $Q$. Note that we will also commonly use the notation $G$, $B$ or $U$ to mean subset of $\Omega\times\RR^d$ when the time parameter $s$ is fixed, and also write $G$ as a subset of $\RR^d$ for fixed $(s,x)$. 

\begin{remark}
Note that our definition of $\eps_0$ defers from the one of Hwang and Velazquez in \cite{HwangVelazquez10} which morally includes the term $R/|v-\hat{V}^+(t)|^3$ in the maximum \eqref{def:eps0}, with $\hat{V}^+(t)$ the specular reflection of the $\hat{V}(t)$ at the last time of reflection $s_0\in[t-\Delta,t]$. \\
This difference is directly related to the fact that we develop in the paper a global analysis of the trajectories, whereas Hwang and Velazquez developed a localised analysis. In the global framework, the main argument to control the evolution of the velocity is the balance between the number of reflections and the impact of these reflections on the direction of the velocity. Heuristically, the more reflections happen within the interval $(t-\Delta, t)$, the closer the trajectory is to the grazing, i.e. the more tangential the trajectory is at the points of reflections, hence the lesser the impact of the specular reflection on the direction of the velocity. In this context, it is enough to compare $v$ with $\hat{V}(t)$ so we do not need to add a comparison with $\hat{V}_+(t)$ in the definition of $\eps_0.$
\end{remark}

\subsubsection{The Good Set}
For the integral over the good set $G$, the key argument is the following pointwise control of $E$: for any $(s,x)\in(0,T)\times\Omega$ and $\lambda>0$ 
\begin{align*}
| E(s,x) | &\lesssim \int_\Omega \frac{\rho(s,y)}{|x-y|^2} \d y \\
&\lesssim \| \rho(s)\|_{L^\infty} \int_{|x-y|<\lambda} \frac{\d y }{|y-x|^2} + \| \rho(s)\|_{L^{5/3}} \left(\int_{|y-x|>\lambda} \frac{\d y}{|y-x|^5} \right)^{2/5} \\
&\lesssim \| \rho(s)\|_{L^\infty} \lambda +  \| \rho(s)\|_{L^{5/3}} \lambda^{-4/5} .
\end{align*}
Choosing $\lambda$ such that $\| \rho(s)\|_{L^\infty} \lambda = \| \rho(s)\|_{L^{5/3}} \lambda^{-4/5} $ yields
\begin{align*}
|E(s,x)| \lesssim \| \rho(s)\|_{L^\infty}^{4/9} \| \rho(s)\|_{L^{5/3}}^{5/9} .
\end{align*}
Moreover, the norm  $ \| \rho(s)\|_{L^{5/3}}$ is related to the kinetic energy of the system: for any $\lambda>0$
\begin{align*}
\rho(s,x) \leq \int_{|v|<\lambda} f \d v + \lambda^{-2} \int_{|v|>\lambda} |v|^2 f \d v \lesssim \| f_0\|_{L^\infty} \lambda^3 + \lambda^{-2} \int |v|^2 f \d v
\end{align*}
and we know that the kinetic energy is bounded: $ \iint |v|^2 f(t) \d v\d x := K(t) \leq K <\infty$ by conservation of the total energy of the system, see e.g. \cite[Chapter 4]{Glassey}. Hence choosing $\lambda = K^{1/5}$ yields $\| \rho(s)\|_{L^{5/3}} \lesssim K^{3/5}$. Therefore we can bound $E$ pointwise as
\begin{align}
| E(s,x)| &\lesssim \| \rho\|_{L^\infty}^{4/9} \lesssim \left( \int_{\RR^3} f(s,x,v) \d v \right)^{4/9} \lesssim \left( \| f_0\|_{L^\infty} \int_{|v|<Q} \d v \right)^{4/9} \nonumber \\
&\lesssim Q^{4/3} .\label{eq:EQ43} 
\end{align}
We use this bound on $E$ to derive a bound on $w= V(s;t,x,v)$ when $|v|<P$: 
\begin{align*}
|w| = |v| + \int_s^t |E(u,X(u))| \d u \leq P+Q^{4/3} (t-s) .
\end{align*}
Choosing $s\in(t-\Delta,t)$ and $ \gamma \geq 4/3 - \eta \geq 0$  we get $|w| \lesssim P$. This yields a control of the restriction of $\rho$ to the good set $G$: 
\begin{align*}
\rho_G(s,y) := \int_{w\in G}  f(s,y,w)\d w \lesssim \|f_0\|_{L^\infty} P^3 .
\end{align*}
Let us now use these estimates to control the integral over the good set of \eqref{eq:Pfaffelyw} with an approach similar to that of the pointwise bound of $E$ above. Since the integral w.r.t to $s$ will not play a role in this bound we first control the rest: for any $\lambda>0$:
\begin{align*}
\iint_G  \frac{f(s,y,w)}{|y-\hat{X}(s)|^2} \d y \d w  &\lesssim  \int_\Omega \frac{\rho_G(s,y)}{|y-\hat{X}(s)|^2}  \d y \\
&\lesssim \|\rho_G(s)\|_{L^\infty} \int_{|y-\hat{X}(s)|<\lambda} \frac{1}{|y-\hat{X}(s)|^2} \d y + \| \rho_G\|_{L^{5/3}} \int_{|y-\hat{X}(s)|>\lambda} \frac{1}{|y-\hat{X}(s)|^2} \d y \\
&\lesssim \|\rho_G(s)\|_{L^\infty} \lambda +  \| \rho_G\|_{L^{5/3}} \lambda^{-4/5} \\
&\lesssim \|\rho_G(s)\|_{L^\infty}^{4/9}  \| \rho_G\|_{L^{5/3}}^{5/9} 
\end{align*}
where we chose $\lambda$ such that $\|\rho_G(s)\|_{L^\infty} \lambda = \| \rho_G\|_{L^{5/3}} \lambda^{-4/5}$. Note that $\rho_G(s,y)\leq \rho(s,y)$ by positivity of $f$, hence we have immediately $\| \rho_G\|_{L^{5/3}} \leq \| \rho \|_{L^{5/3}} \leq K$ the kinetic energy. Finally, we get 
\begin{align*}
\iiint_G \frac{f(s,y,w)}{|y-\hat{X}(s)|^2} \d y \d w \d s \lesssim \Delta P^{4/3}. 
\end{align*}

\subsubsection{The Bad Set}
The control of the integral over the bad set $B$ in \eqref{eq:Pfaffelxv} follows rather immediately from our choice of $\eps_0$: 
\begin{align*}
\iiint_B \frac{f(t,x,v)}{|X(s;t,x,v)-\hat{X}(s)|^2} \d x\d v \d s &\lesssim \int_{t-\Delta}^t \int_{v\notin G} \| f\|_{L^\infty} \eps_0(v) \d v \d s \\
&\lesssim \Delta  \int_{v\notin G} \max\left\{ \frac{R}{|v|^3} , \frac{R}{|v-\hat{V}(t)|^3} , \frac{R}{|v-\hat{V}^+(t)|^3} \right\} \d v \\
&\lesssim \Delta  R \int_P^{Q} \frac{1}{r} \d r \\
&\lesssim \Delta R \ln\left(\frac{Q}{P}\right).
\end{align*}

\subsubsection{The Ugly Set} \label{subsec:ugly} 

For the ugly set $U$, the time-integration in \eqref{eq:Pfaffelxv} is essential. Let us fix some $(t,x,v)$ and define $\mathcal{W}(s)$ as
\begin{align*}
\mathcal{W}(s)= V(s;t,x,v) -  v. 
\end{align*} 
We easily deduce from our transport dynamics \eqref{eq:characX}-\eqref{eq:characV}-\eqref{eq:characSR} the following system of ODEs for the evolution of $\mathcal{W}(s)$: 
\begin{align} 
\dot{\mathcal{W}}(s) &= E(s,X(s;t,x,v)) , & \mathcal{W}(t)=0, \label{eq:characW}\\
\mathcal{W}(\tau) &= \mathcal{R}_{X(\tau;t,x,v)}V(\tau; t,x,v) - v & \mbox{for all } \tau \mbox{ such that } X(\tau;t,x,v)\in\dO, \label{eq:characSRX} 
\end{align}
Note that the reflection condition \eqref{eq:characSRX} does not preserve the norm of $\mathcal{W}(\tau)$, as such it is not comparable to specular reflections and the norm $|\mathcal{W}(s)|$ will not be a continuous function of $s$. Nevertheless, we can bound the jump of $|\mathcal{W}(s)|$ at a reflection time $\tau$ using the Velocity Lemma \ref{lem:VelocityLemma}. Indeed, recall that for any $\tau \in (t-\Delta,t)$, if $(x,v)$ belongs to $\dO_\delta$ as defined in Lemma \ref{lem:isolatedgrazing}  then the Velocity Lemma \ref{lem:VelocityLemma} yields 
\begin{align*}
| V(\tau;t,x,v)\cdot n(X(\tau;t,x,v))| &\leq \sqrt{\alpha(\tau,X(\tau;t,x,v),V(\tau;t,x,v))} \\
&\leq \left( \alpha(t,x,v) e^{C(|v|+1)(t-\tau) + \|E\|_{L^\infty} (t-\tau)^2} \right)^{1/2} \\
&\leq  \left( \alpha(t,x,v) e^{C[(Q+1)\Delta + Q^{4/3}\Delta^2]}\right)^{1/2} 
\end{align*}
hence   
\begin{align}
|\mathcal{W}(\tau^+) | &= | V(\tau^-;t,x,v) - v - 2(V(\tau^-;t,x,v)\cdot n(X(\tau;t,x,v))) n(X(\tau;t,x,v)) | \nonumber\\
&\leq |\mathcal{W}(\tau^-)| + 2 \left( \alpha(t,x,v) e^{C[(Q+1)\Delta + Q^{4/3}\Delta^2]} \right)^{1/2}. \label{eq:Ws1ref}
\end{align}
Moreover, from the uniform bound on $E$ \eqref{eq:EQ43} we know that $\mathcal{W}$ is Lipschitz between reflection times so that, assuming there are $k$ reflections within the interval $(t-\Delta,t)$ we have
\begin{align}
|\mathcal{W}(s)| \lesssim  Q^{4/3}\Delta + 2k \left( \alpha(t,x,v) e^{C[(Q+1)\Delta + Q^{4/3}\Delta^2]} \right)^{1/2}. \label{eq:Wskref}
\end{align}
If $x\notin \dO_\delta$, then the coefficient before $|\xi(x)|$ in \eqref{def:alphadeltakin} could take negative values and $\alpha(t,x,v)$ could cancel even though $(x,v)\notin \gamma_0$. However, if $\tau_1\in(t-\Delta,t)$ is the first reflection time of the backwards trajectory $(X(s;t,x,v),V(s;t,x,v))$ then, by continuity of $X(s)$, the trajectory will reach $\dO_\delta$ at some time $s=t-\tilde{\delta}_1 \in (\tau_1,t)$ before it reflects on the boundary. In the interval $[t-\tilde\delta_1,t]$ there are no reflections so the evolution of $|\mathcal{W}(s)|$ is bounded by the uniform estimate of the electric field \eqref{eq:EQ43}. As a consequence, the inequality \eqref{eq:Ws1ref} still holds with $\alpha(t-\tilde{\delta},X(t-\tilde{\delta};t,x,v), V(t-\tilde{\delta};t,x,v))$ instead of $\alpha(t,x,v)$. Moreover, the same argument holds at any reflection time $\tau_i \in (t-\Delta, t)$, namely there exists a $\tilde{\delta}_i>0$ such that for $s\in (\tau_i - \tilde{\delta}_i, \tau_i)$, $X(s;t,x,v)\in \dO_\delta$ and the isolation of the grazing set ensures that 
\begin{align*}
&\alpha( \tau_i-\tilde{\delta}_i, X(\tau_i-\tilde{\delta}_i; t,x,v),  V(\tau_i-\tilde{\delta}_i; t,x,v)) \\
&\qquad \leq \alpha(t-\tilde{\delta}, X(t-\tilde{\delta};t,x,v), V(t-\tilde{\delta};t,x,v)) e^{C[(Q+1)(\tau_i-\tilde{\delta}_i-t+\tilde{\delta})+ Q^{4/3}(\tau_i-\tilde{\delta}_i-t+\tilde{\delta})^2} \\
&\qquad \leq \alpha(t-\tilde{\delta}, X(t-\tilde{\delta};t,x,v), V(t-\tilde{\delta};t,x,v)) e^{C[(Q+1)\Delta + Q^{4/3}\Delta^2]}.
\end{align*}
Since \eqref{eq:Ws1ref} is established at a reflection time $\tau$, this control of the kinetic distance holds and \eqref{eq:Wskref} follows. Therefore, we define an extension of the kinetic distance as 
\begin{equation*}
\alpha(t,x,v) := \left\{ \begin{aligned} & \alpha(t,x,v) \qquad & \forall x\in \dO_\delta \\
																& \alpha(t-\tilde{\delta}, X(t-\tilde{\delta};t,x,v), V(t-\tilde{\delta};t,x,v)) & \forall x \notin \dO_\delta \end{aligned} \right. 
\end{equation*}
and \eqref{eq:Wskref} holds without the need to distinguish the cases $x\in\dO_\delta$ and $x\notin \dO_\delta$. \\
Now, we know from Lemma \ref{lem:regularitycharacflow} that $\alpha(t,x,v)$ yields a upper bound on the number of reflections $k$. More precisely, with $|v|+\Delta\|E\|_{L^\infty} \sim Q$ (c.f. the proof of Lemma \ref{lem:regularitycharacflow}), and using \eqref{eq:EQ43} we have, assuming w.l.o.g. that $Q\geq 1$
\begin{align*}
k \lesssim  \frac{Q^2 \Delta}{\sqrt{\alpha(t,x,v)}} e^{C [ (Q+1) \Delta + Q^{4/3}\Delta^2]} 
\end{align*}
from which we deduce the following bound on $|\mathcal{W}(s)|$: 
\begin{align*}
|\mathcal{W}(s)| \lesssim  Q^{4/3} \Delta + Q^2 \Delta e^{C [ (Q+1) \Delta + Q^{4/3}\Delta^2]}.
\end{align*}
In order for the exponential term to be uniformly bounded in $Q$, we want $\Delta Q$ to decay when $Q$ grows large. Therefore, we choose $\gamma>1$ and get the following estimate
\begin{align}\label{eq:Ws}
|\mathcal{W}(s)| \lesssim c_0 Q^{2-\gamma} \lesssim c_0 P Q^{2-\gamma-\eta} \leq \frac14 P,
\end{align}
if we choose $1< \gamma\leq 2-\eta$ with $\eta<3/4$ and the appropriate choice of $c_0>0$ in the definition of $\Delta= c_0Q^{-\gamma}$. \\
Now that we've established this bound on $|\mathcal{W}(s)|$, the control of the integral over the ugly set in \eqref{eq:Pfaffelxv} follows rather classically. As a consequence, we shall skip the details which can be found e.g. in \cite{Schaeffer91} or \cite[Section 4.4]{Glassey} and only outline the following steps of the proof. First, note that one can define $\hat{\mathcal{W}}(s) = \hat{V}(s)-\hat{V}(t)$ and derive the same estimate so that we have
\begin{align*}
|V(s;t,x,v) - \hat{V}(s)| &\geq |v-\hat{V}(t)| - |V(s;t,x,v)-v| - |\hat{V}(t)-\hat{V}(s)| \\
&\geq  |v-\hat{V}(t)| - \frac12 P\\
&\geq \frac12  |v-\hat{V}(t)|
\end{align*}
where the last bound follows from the fact that $(s,x,v)\in U$. We now define $Z(s)$ as
\begin{align*}
Z(s) = X(s;t,x,v) - \hat{X}(s)
\end{align*}
and have immediately $|\dot{Z}(s)| = |V(s;t,x,v) - \hat{V}(s)| \geq |v-\hat{V}(t)|/2$. Moreover, one can compare $Z(s)$ with the linear approximation $\bar{Z}(s) = Z(s_0) +\dot{Z}(s_0)(s-s_0)$ with $s_0\in[t-\Delta,t]$ minimizing $|Z(s)|$, and using the uniform bound on $\ddot{Z}(s)$ which follows from \eqref{eq:EQ43} one gets
\begin{align*}
|Z(s)|  \gtrsim |s-s_0| |v-\hat{V}(t)| .
\end{align*}
Furthermore, considering $(s,x,v)\in U$ for which $\eps_0(v) = \frac{R}{|v|^3}$, the substitution $s\to \tau = |s-s_0| |v-\hat{V}(t)|$ yields
\begin{align*}
\int_{t-\Delta}^t \frac{\d s}{|Z(s)|^2} &\leq \int_{t-\Delta}^t \frac{\d s}{ |s-s_0|^2 |v-\hat{V}(t)|^2} \\
&\leq \frac{1}{|v-\hat{V}(t)|} \bigg( \int_0^{\eps_0(v)} \frac{1}{\eps_0^2} \d \tau + \int_{\eps_0}^{+\infty} \frac{1}{\tau^2} \d \tau   \bigg)\\
&\lesssim \frac{|v|^3}{R|v-\hat{V}(t)|}.
\end{align*}
Similarly, for $(s,x,v)\in U$ such that $\eps_0(v) = \frac{R}{|v-\hat{V}(t)|^3}$ one gets
\begin{align*}
\int_{t-\Delta}^t \frac{\d s}{|Z(s)|^2} \lesssim \frac{|v-\hat{V}(t)|^3}{R|v-\hat{V}(t)|}
\end{align*}
so  that for any $(s,x,v)\in  U$:
\begin{align*}
\int_{t-\Delta}^t \frac{\d s}{|Z(s)|^2} &\lesssim \frac{1}{R|v-\hat{V}(t)|}  \Big( \min\left\{|v|,|v-\hat{V}(t)|\right\} \Big)^3 \lesssim \frac{|v|^2}{R} . 
\end{align*}
Using the boundedness of the kinetic energy, we finally derive the following estimate
\begin{align*}
\iiint_U \frac{f(t,x,v)}{|X(s;t,x,v)-\hat{X}(s)|^2} \d x\d v \d s  \lesssim \frac{1}{R} \iint_U |v|^2 f(t,x,v) \d x \d v \lesssim \frac{1}{R}. 
\end{align*}

\subsubsection{Conclusion of the Pfaffelmoser argument} \label{subsec:conclusionPfaffelmoser}

Collecting the estimates above, we have
\begin{align*}
\frac{1}{\Delta} \int_{t-\Delta}^t | E(s,\hat{X}(s)) | \d s &\lesssim P^{4/3} +  R \ln \left( \frac{Q}{P} \right) + \frac{1}{\Delta R} \\
&\lesssim Q^{4\eta/3} + Q^\beta \ln( Q^{1-\eta}) + Q^{\gamma-\beta} .
\end{align*}
To optimize the order of $Q$ on the right hand side we can take $R=Q^\beta \ln^{-1/2}(Q)$ and choose $\eta=\frac{6}{11}$, $\beta=\frac{8}{11}$ and $\gamma=\frac{16}{11}$ which yields 
\begin{align*}
\frac{1}{\Delta} \int_{t-\Delta}^t | E(s,\hat{X}(s)) | \d s &\lesssim Q^{8/11} \ln^{1/2} Q .
\end{align*}
We see in particular that the right-hand-side is sub-linear in $Q$ so we have proved that 
\begin{align} \label{eq:bounddotQ}
\frac{ Q(t) - Q(t-\Delta)}{\Delta} \lesssim Q^{8/11}(t) \ln^{1/2} Q(t) 
\end{align}
and the boundedness of $Q$ follows from a classical iteration procedure, see e.g. \cite[Section 4.5]{Glassey}. 

\end{proof} 

\subsection{Proof of Theorem \ref{thm:mainVP}} \label{sec:proofthmVP}
 
\begin{proof}[Proof of Theorem \ref{thm:mainVP}] 
In the previous sections, we have constructed solutions to the Vlasov-Poisson system in the sense of Theorem \ref{thm:mainVP} on a time interval $[0,T]$ in dimension 3. Note, indeed, that due to the Hopf Lemma the electric field in the Dirichlet case satisfies $E(t,x)\cdot n(x)<0$ for all $t\in[0,T]$ and $x\in\dO$ therefore the Velocity Lemma 
\ref{lem:VelocityLemma} applies. To conclude the proof of the theorem we only need to show uniqueness and that this construction holds as $T\to \infty$. 

\paragraph{Global solutions: }
First of all, recall that thanks to the flatness assumption \eqref{hyp:flatnessVP} and the isolation of the grazing set given by Lemma \ref{lem:VelocityLemma} we can restrict our analysis to the set $\mathcal{O}$ defined in \eqref{def:setO}, i.e. away from the grazing set, where the number of reflections that any trajectory undergoes within a finite interval of time is uniformly bounded as a consequence of Lemma \ref{lem:regularitycharacflow}. By convergence of $E^n \to E,$ the characteristic flow $(X^n(s;t,x,v),V^n(s;t,x,v))$ converges uniformly to $(X(s;t,x,v),V(s;t,x,v)),$ and therefore $Q^n(t) \to Q(t)$ uniformly on $[0,T]$, see e.g. \cite[Proposition 5]{HwangVelazquez09} for details. Therefore, there exists $n_0>0$ such that for all $n\geq n_0$ the assumption $Q^n(t)<K$ in Proposition \ref{prop:uniformcontrols} can be removed as it follows from Proposition \ref{prop:Pfaffelmoser} and our choice of initial data. Hence, in order to prove that our construction holds as $T\to \infty$ it is enough to show that $K(T)$ given by Proposition \ref{prop:Pfaffelmoser} is finite for any $T>0$, which follows classically from our bound \eqref{eq:bounddotQ} on the discrete derivative of $Q$. 
\paragraph{Uniqueness: } 
Let us consider $(f^1,E^1)$ and $(f^2,E^2)$ two solutions of the Vlasov-Poisson system in the sense of Theorem \ref{thm:mainVP} with the same initial condition $f_0$. We easily see that the difference $f^1-f^2$ satisfies
\begin{align*}
\pa_t (f^1-f^2) + v\cdot \na_x (f^1-f^2) + E^1 \cdot \na_v (f^1-f^2) = (E^1-E^2)\cdot \na_v f^2. 
\end{align*}
We then consider the characteristic flow $(X^1(s;t,x,v),V^1(s;t,x,v))$ associated with the force field $E^1$, which is indeed characteristic for the transport operator on the left-hand-side of the equation above, and integrate along this flow to write
\begin{align*}
(f^1-f^2) (t,x,v) = \int_0^t (E^1-E^2)(s,X^1(s;t,x,v)) \cdot \na_v f^2 (s,X^1(s;t,x,v),V^1(s;t,x,v)) \d s 
\end{align*}
since $(f^1-f^2) (0,x,v)=0 $ by assumption. Using classical estimates on the Poisson kernel in a $C^{2,1}$ domain $\Omega$ we have for all $(s,x)$:
\begin{align} \label{eq:uniquessE1E2}
|(E^1-E^2)(s,x) |   \lesssim \int_\Omega \frac{ |\rho^1(s,y) -\rho^2(s,y) |}{|x-y|^2} \d y 
\end{align}
hence, with Fubini and the fact that the characteristic flow is Hamiltonian 
\begin{align*}
\| (f^1-f^2)(t) \|_{L^1(\Omega\times\RR^3)} \leq \int_0^t \int_\Omega | \rho^1(s,y)-\rho^2(s,y)| \iint_{\Omega\times\RR^3} \frac{ |\na_v f^2(s,x,v)| }{|x-y|^2} \d x \d v \d y \d s .
\end{align*}
On the one hand we have
\begin{align*}
\iint_{\Omega\times\RR^3} \frac{ |\na_v f^2(s,x,v)| }{|x-y|^2} \d x \d v &\leq \int_{|y-x|<1} \frac{ | \na_vf^2(t,y) |}{|x-y|^2} \d y  + \int_{|y-x|\geq 1} \frac{ | \na_vf^2 (t,y)| }{|x-y|^2} \d y \\
&\lesssim  \| \na_vf^2(t) \|_{L^\infty (\Omega;L^1(\RR^3))} +  \| \na_vf^2(t) \|_{L^1(\Omega\times\RR^3)}
\end{align*}
where both norms of $\na_vf^2$ are uniformly bounded on $[0,T]$ by assumption. On the other hand, we have 
\begin{align*}
\int_\Omega |\rho^1(s,y) -\rho^2(s,y) | \d y \leq \| (f^1-f^2)(s) \|_{L^1(\Omega\times\RR^3)} 
\end{align*}
hence
\begin{align*}
\| (f^1-f^2)(t) \|_{L^1(\Omega\times\RR^3)} \leq C(T) \int_0^t \| (f^1-f^2)(s) \|_{L^1(\Omega\times\RR^3)}  \d s 
\end{align*}
and uniqueness follows from Gr\"{o}nwall's Lemma, which concludes the proof of Theorem \ref{thm:mainVP}. 
\end{proof}

\section{The ionic Vlasov-Poisson system}  \label{sec:VPME} 

We now turn to the VPME system \eqref{eq:VPME}. The general strategy of proof for Theorem \ref{thm:mainVPME} is the same as for Theorem \ref{thm:mainVP} in the Vlasov-Poisson system case, with the exception of the elliptic regularity estimate of Corollary \ref{cor:iterative} which do not apply to the non-linear Poisson equation of \eqref{eq:VPME}. Therefore, we will prove analogous elliptic estimates in the next two sections as stated in Proposition \ref{prop:EllipticRegDir} for the Dirichlet case, and Proposition \ref{prop:EllipticRegNeu} for the Neumann case and in Section \ref{sec:proofthmVPME} we will rather briefly present the proof of Theorem \ref{thm:mainVPME}, focusing on the differences between the VP and the VPME case in order to avoid unnecessary repetitions.

\subsection{The Dirichlet case} 

\begin{prop}\label{prop:EllipticRegDir}
For any $\rho \geq 0$ in $C^{0,\alpha}(\Omega)$, the non-linear Poisson equation with Dirichlet boundary condition 
\begin{equation}\label{eq:PMEDir}
\left\{ \begin{aligned} 
& \Delta U = e^U - \rho - 1 \qquad & x\in\Omega \\
& U(x) = 0 & x\in\dO .
\end{aligned} \right. 
\end{equation}
has a unique solution $U\in H^1_0(\Omega)$. Furthermore, this solution is in $C^{2,\alpha}(\Omega)$ and satisfies $\pa_n U(x) <0$ for all $x\in\dO$.
\end{prop}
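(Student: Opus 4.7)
The plan is to use a Calculus of Variations approach to produce a weak solution, then bootstrap regularity via $L^\infty$ bounds, and finally invoke a Hopf-type boundary point lemma for the strict sign of the normal derivative.

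\textbf{Existence and uniqueness in $H^1_0$.} I would introduce the energy functional
\begin{equation*}
J(U) = \frac12 \int_\Omega |\na U|^2 \d x + \int_\Omega e^U \d x - \int_\Omega (\rho+1) U \d x
\end{equation*}
on $H^1_0(\Omega)$. Each term is convex, and the Dirichlet term together with $\int e^U$ makes $J$ strictly convex; coercivity follows from Poincar\'e and the nonnegativity of $e^U$; weak lower semicontinuity comes from convexity of $U\mapsto e^U$ (via Fatou) and continuity of the linear term. The direct method yields a unique minimizer $U\in H^1_0(\Omega)$ with $e^U\in L^1$. Taking variations $U+\eps\varphi$ with $\varphi\in C_c^\infty(\Omega)$ and using dominated convergence (since $\varphi$ is bounded and $e^U\in L^1$), the Euler--Lagrange equation $-\Delta U + e^U = \rho+1$ holds in the distributional sense, i.e.\ $U$ solves \eqref{eq:PMEDir} weakly. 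Strict convexity gives uniqueness.

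\textbf{Two-sided $L^\infty$ bounds.} I would prove $0\leq U\leq V$, where $V\in C^{2,\alpha}(\bO)$ solves the linear problem $-\Delta V=\rho+1$ with $V|_{\dO}=0$ (which exists by classical elliptic theory). The lower bound follows from a minimum principle argument: at any interior negative minimum $x_0$ one would have $\Delta U(x_0)\geq 0$, while the equation forces $\Delta U(x_0)=e^{U(x_0)}-\rho(x_0)-1<-\rho(x_0)\leq 0$, a contradiction; combined with $U|_{\dO}=0$, this gives $U\geq 0$. The upper bound follows from $-\Delta(V-U)=e^U\geq 0$ with $V-U=0$ on $\dO$, so the weak maximum principle yields $V\geq U$. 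Both principles can be made rigorous at the level of weak solutions by standard truncation.

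\textbf{$C^{2,\alpha}$ regularity and the Hopf lemma.} Once $U\in L^\infty(\Omega)$, the right-hand side $e^U-\rho-1$ is in $L^\infty$. Calderon--Zygmund gives $U\in W^{2,p}(\Omega)$ for all finite $p$, hence (using the $C^{2,1}$ regularity of $\dO$ to embed $W^{2,p}\hookrightarrow C^{1,\beta}(\bO)$) $U\in C^{1,\beta}(\bO)$ for any $\beta<1$. Then $e^U-\rho-1\in C^{0,\alpha}(\bO)$, and Schauder theory improves this to $U\in C^{2,\alpha}(\bO)$. For the strict sign of the normal derivative, rewrite the equation in the linearized form
\begin{equation*}
-\Delta U + c(x)\, U = \rho, \qquad c(x):=\int_0^1 e^{tU(x)}\d t > 0,
\end{equation*}
with $c$ continuous and bounded. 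Since $U\geq 0$, $U=0$ on $\dO$, and $-\Delta U+cU=\rho\geq 0$, the strong maximum principle for $-\Delta+c$ with $c\geq 0$ bounded yields either $U\equiv 0$ or $U>0$ in $\Omega$. The case $U\equiv 0$ would force $\rho\equiv 0$, which is excluded in the application (where $\int_\Omega \rho = 1$, a positivity assumption that should implicitly accompany the statement). The uniform convexity of $\Omega$ (hence the interior ball condition at every boundary point) together with $U\in C^{2,\alpha}(\bO)$ then allows us to apply the classical Hopf boundary point lemma, giving $\pa_n U(x)<0$ for every $x\in\dO$.

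\textbf{Main obstacle.} The delicate point is the interplay between the weak formulation and the exponential nonlinearity: a priori the minimizer only satisfies $e^U\in L^1$, and in dimension $d=3$ there is no Trudinger--Moser embedding to guarantee more. Hence the Euler--Lagrange derivation and the comparison arguments must be handled carefully (via bounded test functions and truncation) before the $L^\infty$ bound unlocks standard elliptic machinery; once $U\in L^\infty$, everything else follows from classical Schauder and Hopf theory.
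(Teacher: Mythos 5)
Your proof is correct in outline, and for the regularity step it takes a genuinely different route from the paper. The existence/uniqueness part is essentially identical (same functional, direct method, strict convexity), as is the final Hopf argument. Where you diverge is in how you upgrade the minimizer from ``$e^U\in L^1$'' to ``$U\in L^\infty$'': you compare $U$ with the solution $V$ of the linear problem $-\Delta V=\rho+1$, $V|_{\dO}=0$, obtaining $0\leq U\leq V$ by maximum-principle arguments, after which Calder\'on--Zygmund and Schauder finish the job. The paper instead splits $U=\hu+\bu$ with $\Delta\bu=-\rho$ and runs a Moser-type iteration on $\hu$, testing the weak formulation against truncated exponentials $e^{k(\hu\wedge n)}-1$ to get $e^{\hu}\in L^k(\Omega)$ for every $k$, and only then applies $W^{2,k}$ theory and Sobolev embedding. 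Your route is shorter and more elementary for the Proposition as stated, provided you carry out the truncation you allude to: the weak formulation is a priori only available for $C^\infty_c$ test functions, and since $e^U$ is merely $L^1$ you must extend it to bounded $H^1_0$ test functions (truncated mollifications plus dominated convergence) before testing with truncations of $(U-V)_+$ and $U_-$; testing directly with the possibly unbounded functions $(U-V)_-$ or $U_-$ is not justified at that stage. What the paper's heavier machinery buys is precisely the splitting $U=\hu+\bu$ together with the quantitative bound $\|\na\hu\|_{C^{0,\alpha}}\leq Ce^{5M_1}$ in terms of $M_1=\|\bu\|_{L^\infty}$; this decomposition of $E$ into a singular part controlled by $\rho$ and a regular part with an explicit $L^\infty$ bound is reused in Section 5.3 for the Pfaffelmoser argument and the uniqueness estimate for VPME, so the Proposition's proof is doing double duty there.

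One small point you correctly flag, which the paper also only handles implicitly: if $\rho\equiv 0$ then $U\equiv 0$ and $\pa_n U=0$, so the strict inequality $\pa_n U<0$ requires $\rho\not\equiv 0$ (guaranteed in the application by $\int_\Omega\rho\,\d x=1$). Also note that the Hopf lemma needs the interior ball condition, which holds here since $\Omega$ is $C^{2,1}$ (indeed uniformly convex), so your invocation of it is legitimate.
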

\begin{proof}
In the spirit of \cite[Proposition 3.5]{GriffinIacobelli21torus}, we prove existence of a solution in $H^1_0(\Omega)$ via a Calculus of Variation approach, by finding a minimiser for the energy functional 
\begin{align*}
\phi \to \mathcal{E}_D[\phi] := \int_{\Omega} \left( \frac12 |\na \phi|^2 + e^\phi - \phi - \rho \phi \right) \d x 
\end{align*}
in $H^1_0(\Omega),$ and proving that this minimiser solves the Euler-Lagrange equation associated with $\mathcal{E}_D$, which is \eqref{eq:PMEDir}. Uniqueness then follows from the strict convexity of $\mathcal{E}_D$.\\
First of all, for any $\phi\in H^1_0(\Omega)$ let us write $\phi_+(x) = \max(0,\phi(x))$. Since $\rho\geq 0$, we have $-\rho \phi \geq -\rho \phi_+$ and $e^\phi-\phi \geq e^{\phi_+} - \phi_+$ because for all $x\leq 0$, $e^x-x \geq e^0-0 = 1$. As a consequence 
\begin{align*}
\mathcal{E}_D[\phi] &= \int_\Omega \left( \frac{|\na \phi_+|^2}{2} + \frac{|\na(\phi-\phi_+)|^2}{2} + e^\phi -\phi -\rho \phi \right)\d x \\
&\geq \int_\Omega \left( \frac{|\na \phi_+|^2}{2} + e^{\phi_+}-\phi_+ - \rho \phi_+ \right) \d x = \mathcal{E}_D(\phi_+) .
\end{align*}
Hence, if $\phi$ minimises $\mathcal{E}_D$ on $H^1_0(\Omega)$ then $\phi=\phi_+$. Moreover, by the strong maximum principle, if $\rho \not\equiv 0$ then $\phi>0$ in $\Omega$ and cancels at the boundary, which implies $\pa_n \phi <0$ on $\dO$. \\
Secondly, we show existence of a minimiser. Let us consider a minimising sequence $(\phi^k)$, i.e. a sequence in $H^1_0(\Omega)$ such that
\begin{align*}
\mathcal{E}_D[\phi^k] \to \inf_{\phi\in H^1_0(\Omega)} \mathcal{E}_D[\phi] =: m .
\end{align*}
Note that since $\mathcal{E}_D[\phi_+]\leq \mathcal{E}_D[\phi]$ we can assume w.l.o.g. that $\phi^k\geq 0$. We want to show that the sequence $\phi^k$ is uniformly bounded in $H^1_0(\Omega)$. To that end, we first notice that $\mathcal{E}_D(0)= |\Omega|$ hence, for $k$ large enough,
\begin{align*}
\mathcal{E}_D[\phi^k] \leq |\Omega|. 
\end{align*}
Furthermore, since $\phi^k\geq0$, $e^{\phi^k}-\phi^k \geq (\phi^k)^2$ and since $\rho\in L^\infty(\Omega)$ we can fix $C>0$ such that $\frac{(\phi^k)^2}{2} \geq \rho \phi^k - C$. As a result, we have
\begin{align*}
|\Omega| \geq \mathcal{E}_D[\phi^k] \geq \int_\Omega \left( \frac{|\na \phi^k|^2}{2} + \frac{(\phi^k)^2}{2} -C \right) \d x = \frac12 \| \phi^k \|_{H^1_0(\Omega)} - C|\Omega|, 
\end{align*}
hence the sequence $(\phi^k)$ is equibounded in $H^1_0(\Omega)$. The rest of the proof of existence of a solution to \eqref{eq:PMEDir} follows exactly the proof of \cite[Proposition 3.5]{GriffinIacobelli21torus}, which is a similar result in the torus. For the sake of completeness we outline the main arguments here. The boundedness of $(\phi^k)$ in $H^1_0(\Omega)$ implies that, up to a subsequence, $\phi^k \to U$ a.e., with $U$ a minimiser of $\mathcal{E}_D$. Finally, one can show that $U$ solves the Euler-Lagrange equation associated with $\mathcal{E}_D$, namely \eqref{eq:PMEDir}, by investigating the limit as $\eta \to 0$ of $(\mathcal{E}_D[U+\eta \varphi] - \mathcal{E}_D[U])/\eta$ for any $\varphi \in C^\infty_c(\bO)$. \\
We now turn to the regularity of $U$. We split $U$ into a regular part $\hu$ and a singular part $\bu$ solutions to 
\begin{equation} \label{def:hubuDir} 
\left\{ \begin{aligned} 
& \Delta \hu = e^{\bu +\hu} - 1 \\
& \hu|_{\dO} = 0 
\end{aligned} \right. 
,\qquad 
\left\{ \begin{aligned} 
& \Delta \bu = -\rho\\
& \bu|_{\dO} = 0 .
\end{aligned} \right. 
\end{equation} 
By classical elliptic PDE theory, see e.g. \cite[Chapter 6]{Evans10} on a $C^{2,1}$ domain $\Omega$, we know that for $\rho\in C^{0,\alpha}(\Omega)$ there is a unique solution $\bu \in C^{2,\alpha}_c(\bO) \subset H^1_0(\Omega)$, and consequently we also have a unique $\hu = U - \bu \in H^1_0(\Omega)$. We are now interested in the regularity of $\hu$. \\
First of all, we know that $\hu$ is the unique minimiser in $H^1_0(\Omega)$ of 
\begin{align*}
\phi \to \hat{\mathcal{E}}_D [\phi] := \int_\Omega \left( \frac12 |\na\phi|^2  + e^{\bu+\phi} - \phi\right) \d x 
\end{align*}
where the Dirichlet Poisson potential $\bu$ is uniformly bounded, i.e. there exists $M_1>0$ such that $-M_1\leq \bu(x) \leq M_1$ for all $x\in\bO$. By minimality, this means on the one hand
\begin{align*}
\hat{\mathcal{E}}_D[\hu] \leq \hat{\mathcal{E}}_D[0] = \int e^{\bu} \d x = e^{M_1} |\Omega| <\infty.
\end{align*}
On the other hand, using the Poincaré inequality 
\begin{align*}
\hat{\mathcal{E}}_D[\hu] \geq \frac12 \int_\Omega ( |\na\hu|^2 + e^{\bu+\hu})\d x - C\| \na\hu\|_{L^2(\Omega)} \geq \int_\Omega e^{\hu+\bu} \d x - C^* 
\end{align*}
for some $C^*>0$. Combining the two estimates we get a bound of $e^\hu$ in $L^1(\Omega)$:
\begin{align*}
\int_\Omega e^\hu \d x \leq e^{M_1} (e^{M_1} |\Omega| + C^*).
\end{align*}
Furthermore, by construction we know that for any test function $\phi\in H^1_0(\Omega)$, $\hu$ satisfies
\begin{align*}
\int_\Omega \left( \na \hu \cdot \na \phi + (e^{\bu+\hu}-1)\phi \right)\d x  = 0 .
\end{align*}
In particular for any $n\in \mathbb{N}$, writing $\hu_n:=\hu\wedge  n$, we can take $\phi_n = e^{\hu_n} -1 $ which is indeed in $H^1_0(\Omega)\cap L^\infty(\Omega)$, hence 
\begin{align*}
\int_\Omega \left( |\na\hu|^2 e^\hu \mathds{1}_{\hu\leq n} + e^\bu e^{\hu+\hu_n} - e^{\hu+\bu} - e^{\hu_n}+1 \right) \d x = 0 .
\end{align*}
Since $|\na\hu|^2e^\hu +1 \geq 0$, this yields
\begin{align} \label{eq:estimateUnDir}
\int_\Omega e^\bu e^{\hu+\hu_n} \d x \leq \int_\Omega( e^{\hu+\bu} + e^{\hu_n} )\d x .
\end{align}
Moreover, by construction $e^{\hu_n}$ is increasing and converges to $e^{\hu}$ hence the monotone convergence theorem yields 
\begin{align*}
\| e^\hu \|_{L^2(\Omega)}^2 = \int_\Omega e^{2\hu} \d x \leq \frac{1+e^{M_1}}{e^{-M_1}} \int_\Omega e^\hu \d x  := C_0 \| e^\hu\|_{L^1(\Omega)} .
\end{align*}
We may iterate this argument: for any $k\in\mathbb{N}$ we  take $\phi = e^{k\hu_n} -1 $ and write $C_0=(1+e^{M_1})e^{M_1}$, we get
\begin{align*}
\|e^\hu\|_{L^k(\Omega)}^k = \int_\Omega e^{k\hu} \d x \leq C_0 \int_\Omega e^{(k-1)\hu} \d x \leq \dots \leq  C_0^{k-1} \| e^\hu\|_{L^1(\Omega)} .
\end{align*}
Thus, for any $k\in\mathbb{N}$ we have $\Delta \hu = e^{\hu+\bu}-1 \in L^k(\Omega)$ with $\| e^{\hu+\bu}-1 \|_{L^k(\Omega)} \leq Ce^{5M_1}$, so by standard elliptic regularity $\hu \in W^{2,k}(\Omega)$, see e.g. \cite[Section 6.3]{GilbargTrudinger}. We can take $k$ large enough for Sobolev embedding to yield $\hu\in C^{1,\alpha}(\Omega)$ with
\begin{align} \label{eq:controlnahuDir}
\| \na \hu \|_{C^{0,\alpha}(\Omega)} \leq C e^{5M_1}
\end{align}
which in turns implies $e^U-\rho-1 \in C^{0,\alpha}(\Omega)$ since $\bu\in C^{2,\alpha}_c(\bO)$. Standard elliptic regularity for \eqref{eq:PMEDir} then yields $U \in C^{2,\alpha}_c(\bO)$ and concludes the proof since we already have $\pa_n U<0$ on $\dO$ from the fact that $U$ minimises $\mathcal{E}_D$. 

\end{proof}

\subsection{The Neumann case}

\begin{prop} \label{prop:EllipticRegNeu}
	For any $\rho \geq 0$ in $C^{0,\alpha}(\Omega)$ with $\int_\Omega \rho \d x =1$, consider the non-linear Poisson equation with Neumann boundary condition 
	\begin{equation}\label{eq:PMENeu}
	\left\{ \begin{aligned} 
	& \Delta U = e^U - \rho - 1 \qquad & x\in\Omega \\
	& \pa_nU(x) = h & x\in\dO .
	\end{aligned} \right. 
	\end{equation} 
	with $h<0$ in $C^{1,1} (\dO)$ satisfying \eqref{hyp:hPMENeu}. There is a unique solution $U\in H^1(\Omega)$ to this problem. Furthermore, this solution is in $C^{2,\alpha}(\Omega)$. 
\end{prop}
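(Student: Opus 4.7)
The strategy mirrors the Dirichlet case of Proposition \ref{prop:EllipticRegDir}: I would construct $U$ as the unique minimiser on $H^1(\Omega)$ of an appropriate energy functional whose first variation encodes both the PDE and the Neumann boundary condition, and then split $U$ into a singular and a regular part in order to bootstrap the regularity. The natural candidate is
\begin{align*}
\mathcal{E}_N[\phi] := \int_\Omega \left(\tfrac{1}{2}|\na \phi|^2 + e^\phi - \phi - \rho\phi\right)\d x - \int_{\dO} h \phi \, \d\sigma(x), \qquad \phi \in H^1(\Omega),
\end{align*}
since an integration by parts shows that $\langle \mathcal{E}_N'[U], \varphi\rangle = 0$ for every $\varphi\in H^1(\Omega)$ is equivalent to $-\Delta U + e^U - \rho - 1 = 0$ in $\Omega$ together with $\pa_n U = h$ on $\dO$. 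Strict convexity of $\mathcal{E}_N$ (from the $|\na \phi|^2$ and $e^\phi$ terms) will then yield uniqueness.

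The main obstacle compared to the Dirichlet case is coercivity, since $H^1(\Omega)$ does not enjoy a Poincaré inequality and the additive constant in $\phi$ is not controlled by the Dirichlet energy alone. My plan is to decompose $\phi = c + \psi$ with $\int_\Omega \psi \d x = 0$ and use the normalisation $\int_\Omega \rho = 1$ together with $h<0$ to reduce the $c$-dependence of $\mathcal{E}_N$ to
\begin{align*}
e^c \int_\Omega e^\psi \d x - c A, \qquad A := 1 + |\Omega| - \int_{\dO} |h| \, \d\sigma(x).
\end{align*}
The compatibility condition \eqref{hyp:hPMENeu} is exactly what guarantees $A>0$, so this expression is bounded below for each $\psi$ (minimised at $e^c = A/\int_\Omega e^\psi$), and Jensen's inequality applied to $\psi$ gives a lower bound uniform in $\psi$. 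For the $\psi$-part of the energy I would use Poincaré-Wirtinger together with the trace inequality to control the boundary term $\int_{\dO} h\psi$, yielding equiboundedness of any minimising sequence in $H^1(\Omega)$. Weak compactness, weak lower semicontinuity of the Dirichlet energy, and Fatou's lemma applied to the convex term $e^\phi$ will then produce a minimiser.

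For the regularity I would introduce the splitting $U = \bu + \hu$ where $\bu$ solves the linear Neumann problem $\Delta \bu = -\rho$ with $\pa_n \bu = h$, fixed by $\int_\Omega \bu = 0$. This problem is solvable precisely because of the compatibility $\int_{\dO} h \, \d\sigma = -\int_\Omega \rho = -1$, and classical Schauder theory on the $C^{2,1}$ domain $\Omega$ yields $\bu \in C^{2,\alpha}(\bO)$ with $\|\bu\|_{L^\infty} \leq M_1$. The remainder $\hu = U - \bu$ then solves $\Delta \hu = e^{\bu+\hu} - 1$ with $\pa_n \hu = 0$. From this point the bootstrap is essentially the same as in Proposition \ref{prop:EllipticRegDir}: starting from the minimisation inequality $\mathcal{E}_N[\hu+\bu] \leq \mathcal{E}_N[\bu]$ I would obtain an $L^1$ bound on $e^{\hu}$, and then test the weak formulation with $\phi_n = e^{k (\hu \wedge n)} - 1 \in H^1(\Omega) \cap L^\infty(\Omega)$ to produce iteratively $L^k$ bounds on $e^{\hu}$ for every $k \in \NN$; the boundary term $\int_{\dO} \pa_n \hu \cdot \phi_n$ vanishes thanks to $\pa_n \hu = 0$, so the test function need not vanish on $\dO$. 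Consequently $\Delta \hu \in L^k$ for every $k$, Calderón-Zygmund gives $\hu \in W^{2,k}(\Omega)$, and Sobolev embedding yields $\hu \in C^{1,\alpha}(\Omega)$. Then $e^U - \rho - 1 \in C^{0,\alpha}(\Omega)$ and standard Schauder estimates for the linear Neumann problem with $h \in C^{1,1}(\dO)$ and $\dO \in C^{2,1}$ conclude $U \in C^{2,\alpha}(\bO)$.
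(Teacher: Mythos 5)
Your existence and uniqueness argument is sound, and your coercivity proof is actually a clean alternative to the paper's: the paper splits a minimising sequence into positive and negative parts $\phi^k_\pm$ and treats them separately (using $h<0$ for the positive part and the decomposition $\phi^k_-=C_k+\psi_k$ with \eqref{hyp:hPMENeu} for the negative part), whereas you decompose $\phi=c+\psi$ with $\int_\Omega\psi=0$ once and for all and control the constant mode through the strictly convex function $c\mapsto e^c\int_\Omega e^\psi - cA$ with $A=1+|\Omega|-\int_{\dO}|h|>0$. Both routes hinge on exactly the same use of \eqref{hyp:hPMENeu}, and yours arguably makes the role of that hypothesis more transparent.

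There is, however, a genuine gap in your regularity bootstrap. You define $\bu$ as the solution of the linear Neumann problem $\Delta\bu=-\rho$, $\pa_n\bu=h$, and justify its solvability by the compatibility identity $\int_{\dO}h\,\d\sigma=-\int_\Omega\rho\,\d x=-1$. But that identity is \emph{not} among the hypotheses of this proposition: the Neumann datum here is only required to satisfy \eqref{hyp:hPMENeu}, i.e.\ $h<0$ and $\int_{\dO}|h|\,\d\sigma<1+|\Omega|$, and in general $\int_{\dO}h\,\d\sigma\neq -1$ (the nonlinear equation fixes $\int_{\dO}h=\int_\Omega e^U-1-|\Omega|$, which is not prescribed a priori). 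So your linear problem for $\bu$ has no solution in general, and with it the claim $\pa_n\hu=0$ — on which your vanishing-boundary-term argument relies — collapses. The paper avoids this by splitting the boundary datum itself, $h=h_1+h_2$ with $\int_{\dO}h_2\,\d\sigma=-1$ and $h_1\leq0$, assigning $h_2$ to the linear problem for $\bu$ (now compatible) and $h_1$ to the nonlinear problem for $\hu$; the sign $h_1\leq0$ then makes the boundary term $-\int_{\dO}h_1e^{\hu_n}\,\d\sigma\geq0$ favourable in the test-function computation, playing the role that the vanishing boundary term plays in your version. Your scheme can be repaired without changing its spirit: keep $\pa_n\bu=h$ but solve $\Delta\bu=-\rho+c_0$ with $c_0=(1+\int_{\dO}h\,\d\sigma)/|\Omega|$ to restore compatibility; then $\hu$ solves $\Delta\hu=e^{\bu+\hu}-1-c_0$ with $\pa_n\hu=0$, and the constant $1+c_0=(1+|\Omega|+\int_{\dO}h\,\d\sigma)/|\Omega|$ is strictly positive precisely by \eqref{hyp:hPMENeu}, so the iteration of $L^k$ bounds on $e^{\hu}$ goes through as you describe. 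As written, though, the step is unjustified.
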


Note that one could remove the assumption $\int_\Omega \rho \d x =1$ as long as $\rho$ remains integrable on $\Omega$. The condition on $h$ \eqref{hyp:hPMENeu} would then be 
$$ h<0, \qquad \int_{\dO} |h| \d \sigma(x) < \int_\Omega \rho \d x + |\Omega|. $$
\begin{proof}
Analogously to the Dirichlet case, we will prove existence of solution to \eqref{eq:PMENeu} in $H^1(\Omega)$ by finding the minimiser in $H^1(\Omega)$ of
\begin{align*}
\mathcal{E}_N[U] := \int_\Omega \left( \frac12 |\na U|^2 + e^U - U - \rho U  \right) \d x - \int_{\dO} U h \d \sigma(x)
\end{align*}
and uniqueness follows from the strict convexity of $\mathcal{E}_N$. \\
Let us consider a minimising sequence $(\phi^k)$ in $H^1(\Omega)$ and prove that it is equibounded. An upper bound is immediate since $\mathcal{E}_N[0]=|\Omega|$, hence for $k$ large enough $\mathcal{E}_N[\phi^k]\leq |\Omega|$. Note however that, unlike the Dirichlet case, we cannot easily compare $\mathcal{E}_N[\max(\phi,0)]$ with $\mathcal{E}_N[\phi]$. Instead, we will show equiboundedness of  the positive and negative parts of $\phi^k$ which we denote $\phi^k_\pm = \pm\max(0,\pm \phi^k)$ and that will yield the equiboundedness of $\phi^k$ since $\mathcal{E}_N [\phi^k] = \mathcal{E}_N[\phi^k_+] + \mathcal{E}_N[\phi^k_-]$ and $\| \phi^k\|_{H^1(\Omega)} = \| \phi^k_+\|_{H^1(\Omega)} + \| \phi^k_-\|_{H^1(\Omega)} $.\\
On the one hand, we have 
\begin{align*}
- \int_{\dO}  h\phi^k_+ \d \sigma(x) \geq 0
\end{align*}
and, with the same arguments as in the Dirichlet case, we have $e^{\phi^k_+} - \phi^k_+ - \rho \phi^k_+ \geq \frac12 (\phi^k_+)^2 - C$ for some $C>0$, hence 
\begin{align*}
|\Omega| \geq \mathcal{E}_N[\phi^k_+] \geq \int_\Omega \left( \frac12 |\na\phi^k_+|^2 + \frac12 (\phi^k_+)^2 - C\right) \d x  \geq \frac12 \|\phi^k_+\|_{H^1(\Omega)} - C |\Omega|
\end{align*}
which is the equiboundedness  of $\phi^k_+$.  \\
On the other hand, we write $\phi^k_- = C_k + \psi_k$ with $C_k = \int_\Omega \phi^k_- \d x \leq 0$ and $\psi_k = \phi^k_- - C_k \in H^1(\Omega)$ with $\int_\Omega \psi_k \d x = 0$, which yields
\begin{align*}
\mathcal{E}_N[\phi^k_-] &= \int_\Omega \left( \frac12 |\na\psi_k|^2 +e^{C_k+\psi_k} - (\rho+1) \psi_k  \right) \d x - \int_{\dO} h \psi_k \d \sigma(x)  - C_k \left( 1 + |\Omega|  + \int_{\dO} h \d \sigma(x) \right) .
\end{align*}
Assumption \eqref{hyp:hPMENeu} immediately yields for some $c_0>0$
\begin{align} \label{eq:Ckhyph}
- C_k \left( 1 + |\Omega|  + \int_{\dO} h \d \sigma(x) \right)  \geq c_0 |C_k|. 
\end{align}
On the other hand, using the Poincaré inequality and the Sobolev trace theorem we have
\begin{align*}
\int_\Omega \left( \frac12 |\na\psi_k|^2 +e^{C_k+\psi_k} - (\rho+1) \psi_k  \right) \d x - \int_{\dO} h \psi_k \d \sigma(x) &\geq \frac12 \int_\Omega |\na\psi_k|^2 \d x - C\int_\Omega |\psi_k| \d x - C \int_{\dO} |\psi_k| \d \sigma(x) \\
 &\geq \frac12 \int_\Omega |\na\psi_k|^2\d x - C\|\na  \psi_k\|_{L^2(\Omega)}\\ &\geq \frac12 \int_\Omega | \na\psi_k|^2 \d x - C^*
\end{align*}
for some $C^*>0$. Together with \eqref{eq:Ckhyph} and the upper bound $\mathcal{E}_N(\phi_k)\leq |\Omega|$ we have 
\begin{align*}
\|\na \phi^k_- \|^2_{L^2(\Omega)} + c_0 |C_k| \leq C^*+|\Omega| .
\end{align*}
Finally, using the Poincaré inequality again we get boundedness of $\phi^k_-$ in $H^1(\Omega)$ since
\begin{align*}
\| \phi^k_-\|_{L^2(\Omega)} \leq \| \phi^k_- - C_k \|_{L^2(\Omega)} + |C_k| |\Omega| \leq \| \na \phi^k_-\|_{L^2(\Omega)} + C 
\end{align*}
and the equiboundedness of $\phi_k$ in $H^1(\Omega)$ follows. This implies, up to a subsequence, convergence a.e. of $\phi^k$ towards $U$, the unique minimiser of $\mathcal{E}_N$ in $H^1(\Omega)$. Let us now check that the Euler-Lagrange equation associated with $\mathcal{E}_N$ is indeed \eqref{eq:PMENeu}. For any $\eta>0$ and $\phi\in C^\infty(\bO)$, since $U$ is the minimiser of $\mathcal{E}_N$ we have $\mathcal{E}_N[U+\eta\phi] \geq \mathcal{E}_N[U]$ and 
\begin{align*}
0&\leq \lim_{\eta\to 0} \frac{1}{\eta} \big( \mathcal{E}_N[U+\eta\phi] -\mathcal{E}_N[U]\big) \\
&\leq \lim_{\eta \to 0} \left[ \int_\Omega \left( \na U\cdot \na \phi + \frac12 \eta |\na \phi|^2 + e^U \left( \frac{e^{\eta \phi}-1}{\eta} \right)- \phi - \rho\phi \right) \d x - \int_{\dO} \phi h \d \sigma(x) \right]  \\
&\leq \int_\Omega \left[ \na U \cdot \na \phi + (e^u-1-\rho)\phi \right] \d x - \int_{\dO} \phi h \d \sigma(x) .
\end{align*}
This holds for any $\phi \in C^\infty(\bO)$, so it is true in particular for $\tilde\phi = -\phi$ and, hence for all $\phi\in C^\infty(\bO)$, $U$ satisfies 
\begin{align*}
\int_{\Omega} \left[ \na U \cdot \na \phi + (e^{U}-1-\rho)\phi \right] \d x - \int_{\dO} \phi h \d \sigma(x) =0
\end{align*}
which means that $U$ is indeed the unique weak solution in $H^1(\Omega)$ of \eqref{eq:PMENeu}. \\
For the regularity of $U$, we follow the strategy of the Dirichlet case. We split $U$ into a regular part $\hu$ and a singular part $\bu$ solutions to 
\begin{equation} \label{def:hubuNeu} 
\left\{ \begin{aligned} 
& \Delta \hu = e^{\bu +\hu} - 1 & \mbox{ in } \Omega \\
& \pa_n \hu= h_1 & \mbox{ on } \dO
\end{aligned} \right. 
,\qquad 
\left\{ \begin{aligned} 
& \Delta \bu = -\rho & \mbox{ in } \Omega\\
& \pa_n \bu= h_2 & \mbox{ on } \dO.
\end{aligned} \right. 
\end{equation} 
with 
\begin{equation*}
\left\{ \begin{aligned} 
& h_2< 0 , \quad \int_{\dO} h_2 \d \sigma(x) = -1, \\
& h_1\leq 0 , \quad \int_{\dO} h_1 \d \sigma(x) = \int_{\dO} h \d \sigma(x) +1 
\end{aligned} \right. 
\end{equation*} 
and naturally $h_1 + h_2 = h$ on $\dO$. By standard elliptic regularity theory, see e.g. \cite{Nardi13}, there exists a unique (up to an additive constant) solution $\bu \in C^{2,\alpha}(\Omega)$ for all $\alpha\in(0,1)$, which in turns yields existence and uniqueness (for a fixed $\bu$) of $\hu \in H^1(\Omega)$ which satisfies for all $\phi \in H^1(\Omega)$ 
\begin{align*}
\int_\Omega \left( \na\hu \cdot \na \phi + (e^{\hu+\bu}-1)\phi \right) \d x - \int_{\dO} h_1 \phi \d \sigma(x) = 0.
\end{align*}
Next, we write $\hu_n = \hu \wedge n$ for any $n\in \mathbb{N}$ and take $\phi = e^{\hu_n} \in H^1(\Omega)\cap L^\infty(\Omega)$ as a test function. Since $h_1\leq 0$ we get a similar estimate as \eqref{eq:estimateUnDir} in the Dirichlet case: 
\begin{align*}
\int_\Omega ( e^{\bu} e^{\hu+\hu_n} - e^{\hu_n}) \d x = - \int_\Omega |\na\hu |^2 e^{\hu} \mathds{1}_{\hu\leq n} \d x - \int_{\dO} |h_1|e^{\hu_n} \d \sigma(x) \leq 0 
\end{align*}
hence, for $M_1>0$ such that $-M_1\leq \bu(x) \leq M_1$ for all $x\in \bO$ we have by monotone convergence 
\begin{align*}
\| e^{\hu}\|_{L^2(\Omega)}^2 \leq e^{M_1} \| e^{\hu}\|_{L^1(\Omega)} .
\end{align*}
We can iterate this estimate, choosing $\phi = e^{k\hu_n}$ as a test function for any $k\in\mathbb{N}$ and derive 
\begin{align*}
\| e^{\hu} \|_{L^k(\Omega)}^k \leq e^{M_1} \| e^{\hu} \|_{L^{k-1}(\Omega)}^{k-1} \leq \dots \leq e^{(k-1)M_1}  \| e^{\hu} \|_{L^1(\Omega)}.
\end{align*}
Furthermore, we can take $\phi = 1 \in H^1(\Omega)$ as a test function as well which yields 
\begin{align*}
\int_\Omega e^{\hu} \d x \leq e^{M_1} \int_\Omega (e^{\hu+\bu} -1)\d x \leq e^{M_1} \| h_1 \|_{L^1(\dO)}.
\end{align*}
As a consequence, $e^\hu \in L^k(\Omega)$ for all $k\in \mathbb{N}$ which yields $e^{\bu+\hu} -1 \in L^k(\Omega)$ with $\| e^{\bu+\hu}-1\|_{L^k(\Omega)}\leq C e^{3M_1}$. Similarly to the Dirichlet case, standard elliptic regularity yields $\hu \in W^{2,k}(\Omega)$ and we can take $k$ large enough for Sobolev embedding to yields $\hu\in C^{1,\alpha}(\Omega)$ with 
\begin{align} \label{eq:controlnahuNeu}
\| \na \hu \|_{C^{0,\alpha}(\Omega)} \leq C e^{3M_1} 
\end{align}
and since $\bu\in C^{2,\alpha}(\bO)$ this means $e^U -1-\rho \in C^{0,\alpha}(\Omega)$ and Proposition \ref{prop:EllipticRegNeu} follows by standard elliptic regularity theory. 
\end{proof}

\subsection{Proof of Theorem \ref{thm:mainVPME}} \label{sec:proofthmVPME}

\begin{proof}[Proof of Theorem \ref{thm:mainVPME}] Using the elliptic regularity estimates established above for both the Dirichlet and the Neumann case, we can now prove well-posedness of the VPME system in the sense of Theorem \ref{thm:mainVPME} following the same arguments as in the Vlasov-Poisson case. \\
More precisely, we notice that the electric fields $E=-\na U$, with $U$ solution to \eqref{eq:PMEDir} or \eqref{eq:PMENeu}, is $C^0_tC^{1,\mu}([0,T]\times\bO)^d$ and satisfies $E\cdot n(x) = -\pa_n  U > 0 $ for all $x\in\dO$ so the results of Section \ref{sec:velocitylemma} apply, namely there exists a $\delta$-kinetic distance $\alpha$ and we have a Velocity Lemma. Moreover, if we define an iterative sequence \eqref{eq:iterative} with the non-linear Poisson equation \eqref{eq:PMEDir} or \eqref{eq:PMENeu}, then Theorem \ref{thm:LinearVlasov} holds and the elliptic regularity estimates proved above ensure that both Corollary \ref{cor:iterative} and Proposition \ref{prop:uniformcontrols} hold. Therefore, we have constructed a classical solution of the VPME system under the assumption of uniformly bounded velocities $Q(t)<K(T)$ for all $t\in[0,T]$, with $Q(t)$ given by \eqref{def:Qt}. In order to remove this assumption we decompose the electric field as $E=\hat{E} + \bar{E}$ with the regular part given by $\hat{E} = -\na \hu$ and the singular part given by $\bar{E} = -\na \bu$ where $\hu$ and $\bu$ are defined either by \eqref{def:hubuDir} or \eqref{def:hubuNeu}. \\
On the one hand, since $|\Omega|<\infty$ and by classical elliptic regularity, see e.g. \cite[Chapter 6]{GilbargTrudinger} or \cite{Nardi13},
\begin{align*}
\| \bu \|_{L^\infty(\Omega)} \leq C (1+ \|\rho\|_{C^{0,\alpha}(\Omega)} )
\end{align*}
where $C$ depends on $\|h\|_{C^{1,\alpha}(\dO)}$. Using \eqref{eq:controlnahuDir} or \eqref{eq:controlnahuNeu} this yields a uniform control of $\hat{E}$ in $L^{\infty}(\Omega)$ as 
\begin{align*}
\| \hat{E}(t,\cdot)\|_{L^\infty(\Omega)} \leq C\exp\big( (1+ \|\rho\|_{C^{0,\alpha}(\Omega)} )\big).
\end{align*} 
Therefore, in order to bound the maximum velocity $Q(t),$ one only needs to consider $\bar{E}$ for which the analysis developed in Section \ref{subsec:boundonQt}  applies, so we have an equivalent of Proposition \ref{prop:Pfaffelmoser} for VPME. Finally, we conclude the proof of Theorem \ref{thm:mainVPME} by the same argument as in Section \ref{sec:proofthmVP} in order to show  global existence and uniqueness, with the following estimate instead of \eqref{eq:uniquessE1E2}:
\begin{align*}
	|(E^1-E^2)(s,x) |   &\leq | (\bar{E}^1-\bar{E}^2)(s,x) | + | (\hat{E}^1-\hat{E}^2)(s,x) | \\
	&\lesssim \int_\Omega \frac{ |\rho^1(s,y) -\rho^2(s,y) |}{|x-y|^2} \d y +  \| \hat{E}^1(t,\cdot) - \hat{E}^2(t,\cdot)\|_{L^\infty(\Omega)}.
\end{align*}

\end{proof}

\appendix
\section{Numerical simulations} 

In this appendix, we present some numerical simulations of the trajectories of particles in the linear Vlasov equation \eqref{eq:linVlasov} in order to illustrate the results of Section \ref{sec:velocitylemma}. \\
Let us consider $\Omega$ to be the unit disk in dimension 2 and introduce an electric field $E$. In order to stay close to a Vlasov-Poisson framework, we will consider a field given by a Poisson equation with a nice density $\rho$. More precisely, we consider a Gaussian bell function centred at $x_0=(0.5,0)$
\begin{align*}
	\rho (x) = C\exp\left( -\frac{1}{1-4|x-x_0|^2} \right) \mathds{1}_{|x-x_0|<1/2} 
\end{align*}
and the stationary electric field $E$ given by $E=-\na U$, $\Delta U = -\rho$ with Dirichlet boundary conditions $U|_{\dO} = 0$. This field can be explicitly written as a convolution with the Green function of the ball, see e.g. \cite[Section 2.2.4]{Evans10}. We illustrate $\rho$ and $E$ in Figure \ref{fig:DensityandField} and note that $E$ is indeed outgoing on $\dO$ in the sense $E\cdot n(x) >0$ for all $x\in\dO$ as assumed in Section \ref{sec:velocitylemma}. 
\begin{figure}[!htb]
	\centering
	\caption{Choice of density $\rho$ and associated field $E$}
	\includegraphics[trim=100 100 0 200, clip, width=15cm, keepaspectratio]{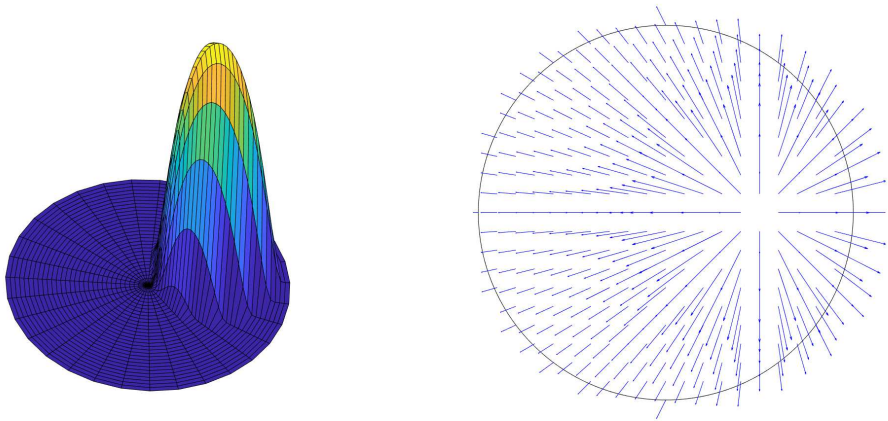}
	\label{fig:DensityandField}
\end{figure}

Given this fixed field $E$, we can compute the trajectory of a particle by solving the system of ODEs \eqref{eq:characX}-\eqref{eq:characV}-\eqref{eq:characSR}. For instance, if we consider a particle starting at $x=(-0.9,0)$ with velocity direction $(-0.2,1)$ then it's trajectory in the domain $\Omega$ will be given by Figure \ref{fig:Traj1}. 
\begin{figure}[!htb]
	\centering
	\caption{Trajectory in the disk from $x=(-0.9,0)$, $v\propto(-0.2,1)$ and evolution of speed}
	\includegraphics[trim=0 75 0 25, clip, width=15cm, keepaspectratio]{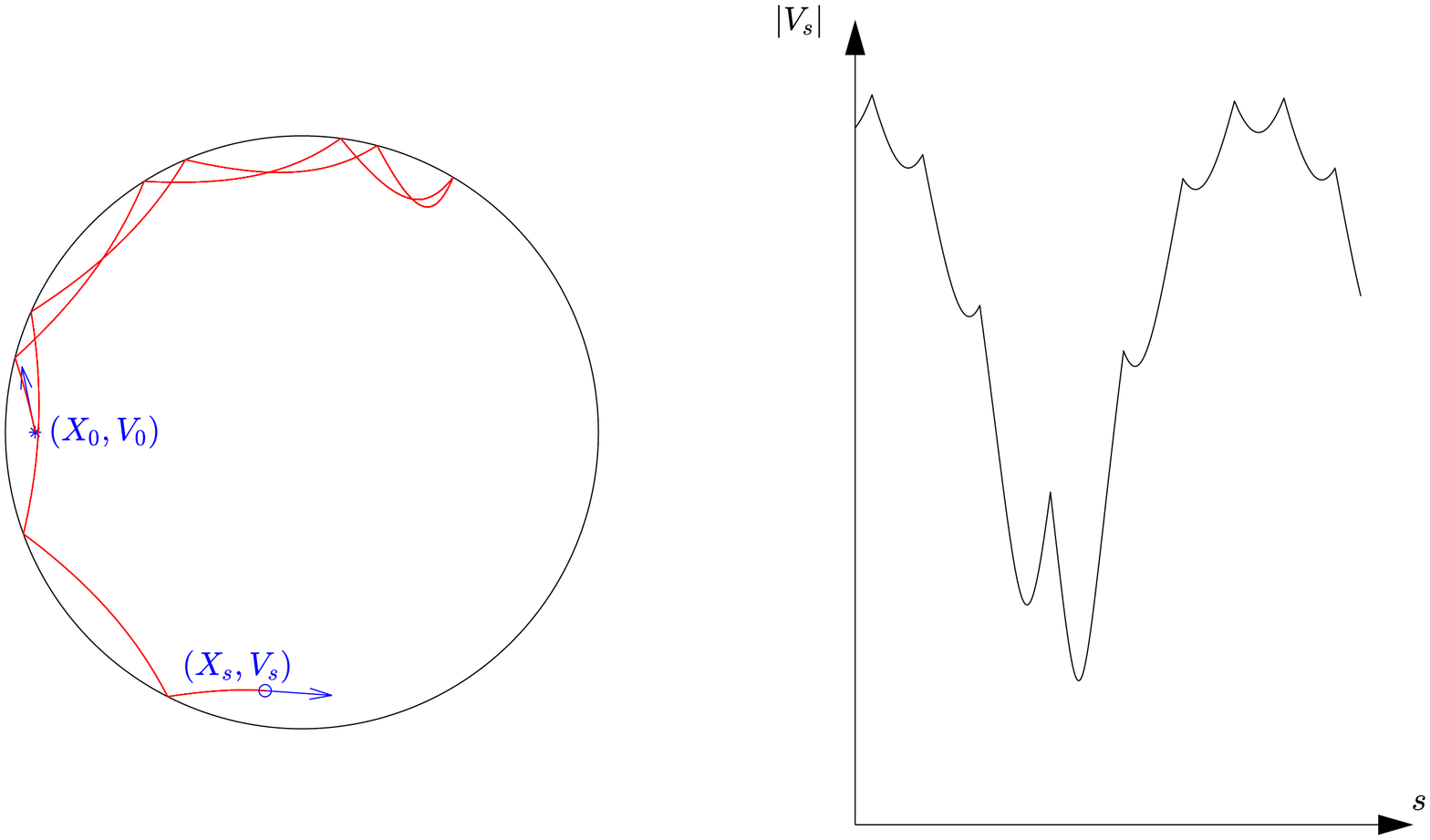}
	\label{fig:Traj1}
\end{figure}
On that figure, we also plot the norm of the velocity $s\to | V(s;0,x,v)|$ which decreases when the particle moves against the direction of the field, and increases when it follows the field. In particular, we see that when the trajectory moves towards the right of the disk, where the field is strongest, it may change direction if the norm of the velocity is too small, as is the case in Figure \ref{fig:Traj1}. This can be interpreted as the particle slowing down to the point where the electric field becomes stronger than the natural inertia of the particle, hence the change of direction. We also notice on this plot that the norm of the velocity is a continuous function of $s$, piece-wise smooth (for the regular field $E$ we consider in this example) with singularities at the points of reflection, as expected. \\
We would like to illustrate the Velocity Lemma \ref{lem:VelocityLemma}, which states that if a trajectory starts close to the grazing set $\gamma_0$ then it remains close to $\gamma_0$ through time, although the size of the neighbourhood increases exponentially fast c.f. \eqref{eq:isolationgamma0}. However, since any neighbourhood of $\gamma_0$ is a subset of the 4-dimensional phase-space, we cannot really illustrate this result on the plot of the trajectory (even though if a trajectory is close to $\gamma_0$ then necessarily $X_s$ is close to $\dO$ by construction but that is not a sufficient condition). Instead, let us look at the kinetic distance $\alpha$ given by \eqref{def:alphadeltakin}. In our example, we characterise the unit disk via the function $\xi(x) = \frac12 (|x|^2-1)$ for which the kinetic distance $\alpha$ can be written as
\begin{align*}
	\alpha(x,v) = \frac12 (v\cdot x)^2 + (1-|x|) \big(|v|^2+E(x)\cdot x\big). 
\end{align*}
Note that since the electric field is stationary, the kinetic distance does not depend directly on $t$. Introducing $\alpha(s) = \alpha(X(s;0,x,v),V(s;0,x,v))$ for a fixed $(x,v)\in\Omega\times\RR^2$ we plot in Figure \ref{fig:VL} the evolution of the kinetic distance along the trajectory illustrated in Figure \ref{fig:Traj1}. \\
The Velocity Lemma \ref{lem:VelocityLemma} gives a uniform exponential bound on $\alpha$: for all $(x,v)\in\bO\times\RR^2$ and $s\in(0,t)$: 
\begin{align*}
	\alpha(x,v) e^{-C_0[ (|v|+1)s + \|E\|_{L^\infty} s^2]} \leq \alpha(X(s;0,x,v),V(s;0,x,v))  \leq 	\alpha(x,v) e^{C_0[ (|v|+1)s + \|E\|_{L^\infty} s^2]}
\end{align*}
for some $C_0 = C_0( \xi,E) >0$. Since the constant $C_0$ is not given explicitly by our Velocity Lemma we will not illustrate this exponential bound. \\

\begin{figure}[!htb]
	\centering
	\caption{Kinetic distance along a trajectory}
	\includegraphics[trim=0 0 0 0, clip, width=7cm, keepaspectratio]{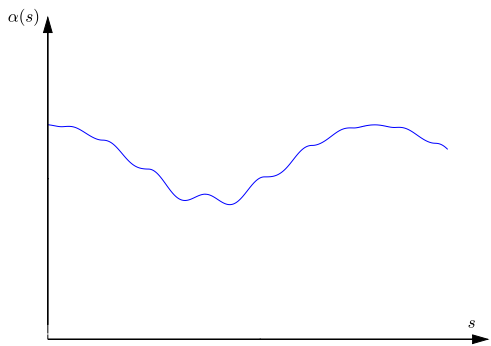}
	\label{fig:VL}
\end{figure}

To conclude this appendix we now consider other examples of trajectories and their associated kinetic distances in order to illustrate the variety of behaviour that these trajectories can exhibit and the associated variations of their kinetic distances. In Figure \ref{fig:MultipleTraj} we represent 6 trajectories, all starting with the same vertical direction of velocity $(0,1)$ (with a greater initial norm than the one of Figure \ref{fig:Traj1}, which is why there is not change of direction in plots 4 to 6 when the trajectory travels through the right side of the domain) and initial position on the $x-$axis with coordinate $-0.2, -0.4, -0.6,-0.8,-0.9,-0.95$ respectively. As in Figure \ref{fig:Traj1} we represented with a blue star the position at $s=0$ and with a blue circle the position at the end time, with colours matching that of Figure \ref{fig:MultipleKD}. These trajectories illustrate in particular the isolation of grazing. We see indeed that the trajectories 3 to 6, which start relatively close to the grazing set, remain in a neighbourhood of $\dO$, neighbourhood which grows smaller as $(x,v)$ grows closer to the grazing set $\gamma_0$.

\begin{figure}[!htb]
	\centering
	\caption{Examples of trajectories on the disk}
	\includegraphics[trim=50 150 0 125 , clip, width=15cm, keepaspectratio]{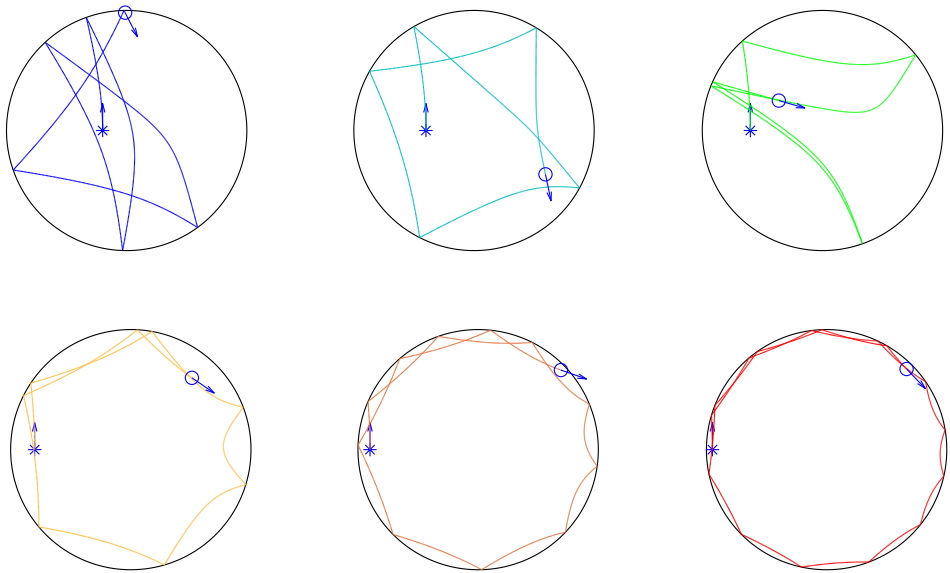}
	\label{fig:MultipleTraj}
\end{figure}
\begin{figure}[!htb]
	\centering
	\includegraphics[trim=50 150 0 140, clip, width=15cm, keepaspectratio]{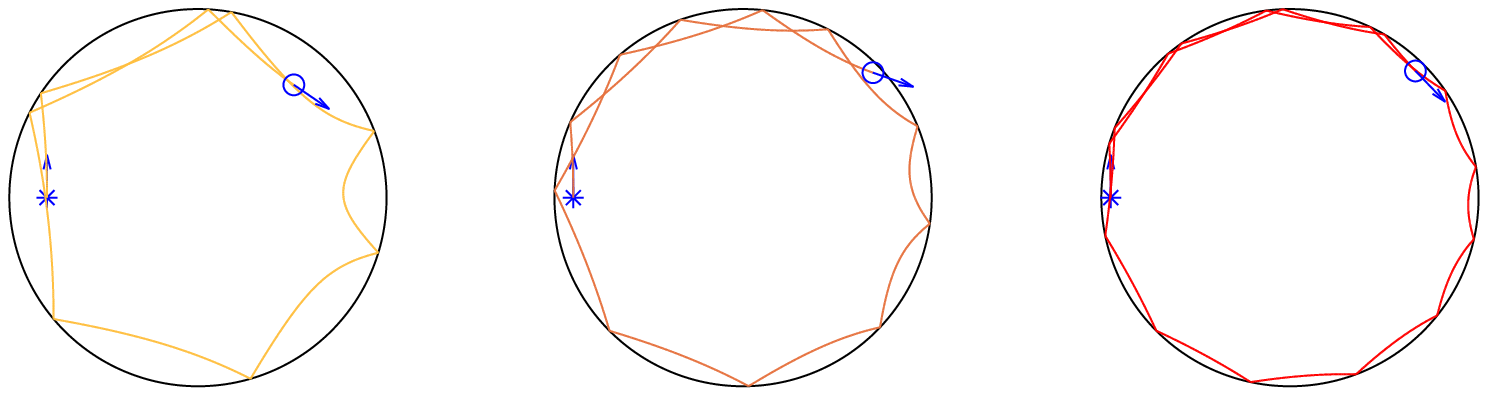}
\end{figure}

We also plot the evolution of the kinetic distance along these trajectories in Figure \ref{fig:MultipleKD}. Note that since they all start with the same velocity (and initial position on the same axis) it is easy to identify which curve corresponds to which trajectory by the initial value of the kinetic distance which decreases as the initial position grows near the boundary.  

\begin{figure}[!htb]
	\centering
	\caption{Kinetic distances as functions of $s$}
	\includegraphics[trim=0 0 0 0, clip, width=7cm, keepaspectratio]{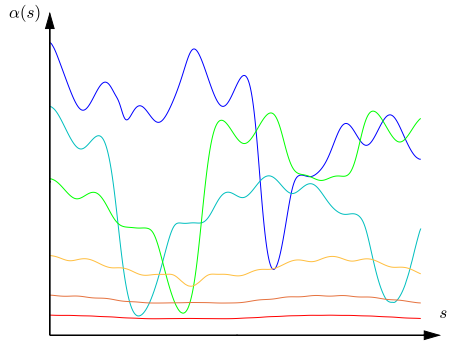}
	\label{fig:MultipleKD}
\end{figure}

One may observe many phenomena in this last illustration. For instance, we see that the kinetic distances of the last two trajectories, which start rather close to grazing, vary little through time. Note that this also applies to the trajectory of Figure \ref{fig:Traj1} which morally would fit between the 4th and 5th trajectories of Figure \ref{fig:MultipleTraj}. On the other hand, the kinetic distance of the trajectories that start with a position far from $\dO$ show significant variations, and we see in particular that, at their lowest, their value is close that of the last two trajectories. This illustrates the fact that if $x$ is close to $\dO$ and $|v| \ll 1$ then $\alpha(x,v)$ will be small even if $v$ is not tangential. The exponential bounds given by the Velocity Lemma \ref{lem:VelocityLemma} naturally allow for such behaviour since the exponential coefficients are uniform in $x\in\bO$ and only depend on the norm of $v$. \\

Finally, let us emphasis that for these illustrations we have chosen a very smooth electric field $E$ and a smooth domain $\Omega$ with constant curvature. Naturally, if the field $E$ is less regular, and if one consider a more general uniformly convex domain $\Omega$, then one may observe a much wider variety of behaviours for the trajectories of the linear Vlasov equation \eqref{eq:linVlasov}. On the importance of curvature, let us recall that we are not able yet to prove a strong enough isolation of the grazing set when the domain is not uniformly convex -- i.e. when the curvature may cancel pointwise or on portions of the boundary -- in order to conclude the Pfaffelmoser argument of Section \ref{subsec:boundonQt} since the exponential controls of our Velocity Lemma are significantly worse in the non-uniformly convex case, as explained in Remark \ref{rmk:non-unif}.

\bibliographystyle{siam}

\bibliography{biblio-1-1.bib}
\end{document}